\numberwithin{equation}{section}
\newtheorem{thm}{Theorem}[section]
\newtheorem{defi}[thm]{{Definition}}
\newtheorem{cor}[thm]{{Corollary}}
\newtheorem{lem}[thm]{{Lemma}}
\newtheorem{prop}[thm]{Proposition}
\newtheorem{exa}[thm]{{Example}}
\newtheorem{note}[thm]{{Notation}}
\def\C{\mathscr C}
\def\Ac{\mathrm{Aut}_{\C}}
\def\Hom{\mathrm{Hom}}
\def\Homc{\Hom_{\C}}
\def\Ho{\Hom_{\C}^0}
\def\Id{\mathrm{Id}}
\def\Mor{\mathrm{Mor}}
\def\Obj{\mathrm{Obj}}
\def\id{\mathrm{id}}
\def\pd{\mathrm{pd}}
\def\lbr{\left(\begin{array}{c}}
\def\lbrt{\left(\begin{array}{cc}}
\def\lbrth{\left(\begin{array}{ccc}}
\def\rbr{\end{array}\right)}
\title[Gorenstein triangular matrix rings and category algebras]{Gorenstein triangular matrix rings and category algebras}
\author{Ren Wang}
\keywords{Gorenstein algebra, category algebra, matrix ring, finite
EI category} \subjclass[2010]{Primary 16G10; Secondary 16D90, 18E30}
\address{School of Mathematical Sciences, University of Science and Technology of China, Hefei, Anhui 230026, P. R. China}
\email{renw@mail.ustc.edu.cn}
\date{\today}
\begin{document}
\begin{abstract}
We give conditions on when a triangular matrix ring is Gorenstein of
a given selfinjective dimension. We apply the result to the category
algebra of a finite EI category. In particular, we prove that for a
finite EI category, its category algebra is $1$-Gorenstein if and
only if the given category is free and projective.
\end{abstract}

\maketitle

\section{Introduction}

Let $R$ be a ring with a unit. Recall that $R$ is \emph{Gorenstein}
if $R$ is two-sided noetherian satisfying $\id_RR < \infty$ and $\id
R_R < \infty$. Here, we use $\id$ to denote the injective dimension
of a module. It is well known that for a Gorenstein ring $R$ we have
$\id_RR=\id R_R$; see \cite[Lemma A]{Zaks}. Let $m\geq 0$. A
Gorenstein ring $R$ is \emph{$m$-Gorenstein} if $\id_RR=\id R_R \leq
m$. We observe that a $0$-Gorenstein ring coincides with a
quasi-Frobenius ring.

 Let $n\geq 2$. Let $\Gamma=\left(
          \begin{array}{cccc}
            R_1 & M_{12} & \cdots & M_{1n} \\
             & R_2 & \cdots & M_{2n} \\
             &  & \ddots & \vdots \\
                &  &  & R_n \\
          \end{array}
        \right)$ be an upper triangular matrix ring of order $n$, where each $R_i$ is a ring and each $M_{ij}$ is an $R_i$-$R_j$-bimodule
        together with bimodule morphisms $\psi_{ilj}: M_{il}\otimes_{R_l} M_{lj} \rightarrow M_{ij}$ satisfying
\[\psi_{ijt}(\psi_{ilj}(m_{il}\otimes m_{lj})\otimes m_{jt})=\psi_{ilt}(m_{il}\otimes \psi_{ljt}(m_{lj}\otimes m_{jt}))\]
for $1\leq i<l<j<t\leq n$.

For $1\leq t\leq n-1$, we denote by $\Gamma_t=\left(
          \begin{array}{cccc}
            R_1 & M_{12} & \cdots & M_{1t} \\
             & R_2 & \cdots & M_{2t} \\
             &  & \ddots & \vdots \\
                &  &  & R_t \\
          \end{array}
        \right)$ the ring given by the $t\times t$ leading principal
        submatrix of $\Gamma$. Denote the natural left
        $\Gamma_t$-module $\lbr M_{1,t+1}\\
\vdots \\ M_{t,t+1}\\ \rbr$ by $M_t^*$.

We will prove the following result, where the first statement is
obtained by applying \cite[Theorem 3.3]{XChen} repeatedly. We
mention that the Gorensteinness of an upper triangular matrix ring
is studied in \cite{XZ} and \cite{EC}.

\begin{prop}\label{XChen1'}
Assume that $R_1, R_2, \cdots, R_n$ are quasi-Frobenius rings. Then
\begin{enumerate}
\item The upper triangular matrix ring $\Gamma$ is Gorenstein if and only if all the bimodules $M_{ij}$ are finitely
generated projective on both sides;

\item The upper triangular matrix ring $\Gamma$ is $1$-Gorenstein if and only if all the bimodules $M_{ij}$ are finitely
generated projective on both sides, and each left $\Gamma_t$-module
$M_t^*$ is projective for $1\leq t\leq n-1$.
\end{enumerate}
\end{prop}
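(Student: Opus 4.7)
The plan is to induct on $n$, viewing $\Gamma$ at each step as a $2\times 2$ upper triangular matrix ring via the block decomposition
\[
\Gamma = \begin{pmatrix} \Gamma_{n-1} & M_{n-1}^{*} \\ 0 & R_n \end{pmatrix},
\]
and applying \cite[Theorem 3.3]{XChen} to this two-row form. The case $n=1$ is immediate since $R_1$ is quasi-Frobenius; in the inductive step I assume both (1) and (2) for $\Gamma_{n-1}$.

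For part (1), \cite[Theorem 3.3]{XChen} applied to the block form above yields that $\Gamma$ is Gorenstein if and only if $\Gamma_{n-1}$ is Gorenstein, $R_n$ is Gorenstein, and $M_{n-1}^{*}$ is finitely generated projective as a left $\Gamma_{n-1}$-module and as a right $R_n$-module. The inductive hypothesis rephrases Gorensteinness of $\Gamma_{n-1}$ as the f.g.\ projectivity on both sides of all $M_{ij}$ with $1\le i<j\le n-1$. The right $R_n$-module condition on $M_{n-1}^{*}=\bigoplus_{i<n}M_{i,n}$ unpacks componentwise into f.g.\ projectivity of each $M_{i,n}$ over $R_n$. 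What remains is to identify f.g.\ projectivity of $M_{n-1}^{*}$ as a left $\Gamma_{n-1}$-module (assuming the lower $M_{ij}$'s are already f.g.\ projective on both sides) with f.g.\ projectivity of each $M_{i,n}$ over $R_i$; I would do this by comparing $M_{n-1}^{*}$ against the indecomposable projective column modules $\Gamma_{n-1} e_i$, whose $j$-th entry is f.g.\ projective over $R_j$ by the inductive hypothesis.

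For part (2), the same $2\times 2$ cut together with the sharpened form of \cite[Theorem 3.3]{XChen} controlling the selfinjective dimension $\le 1$ gives: $\Gamma$ is $1$-Gorenstein if and only if $\Gamma_{n-1}$ is $1$-Gorenstein, $M_{n-1}^{*}$ is f.g.\ projective on both sides, and $M_{n-1}^{*}$ is moreover projective as a left $\Gamma_{n-1}$-module. By the inductive hypothesis the first conjunct becomes precisely the two clauses of part (2) for indices $\le n-1$; adjoining the conditions on $M_{n-1}^{*}$ recovers the full criterion of part (2) for $\Gamma$, where the new ``$M_{t}^{*}$ projective over $\Gamma_{t}$'' condition at $t=n-1$ is supplied by the final conjunct.

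The hard part will be the componentwise-versus-global projectivity comparison for column modules over $\Gamma_t$. The direction ``columnwise projectivity forces entrywise projectivity'' follows by applying the projection along $\Gamma_t\to R_i$; the converse requires using the already established f.g.\ projectivity of the lower $M_{ji}$'s to assemble an explicit direct sum decomposition of $M_{n-1}^{*}$ as a sum of shifted copies of the $\Gamma_{n-1}e_i$'s, compatibly with the bimodule maps $\psi_{ilj}$. Everything else is routine block-matrix bookkeeping on top of repeated invocations of \cite[Theorem 3.3]{XChen}.
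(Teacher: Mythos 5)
Your inductive skeleton (cutting off the last row and column to view $\Gamma$ as a $2\times 2$ block matrix over $\Gamma_{n-1}$ and $R_n$, then invoking \cite[Theorem 3.3]{XChen}) matches the paper's strategy, which develops $n\times n$ versions of this in Propositions~\ref{XChen1} and~\ref{LG1} and then quotes them. However, there is a genuine error at the heart of your proof of part~(1).

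You claim (and plan to prove, calling it the ``hard part'') that, once the lower bimodules $M_{ij}$ with $j\le n-1$ are f.g.\ projective on both sides, $M_{n-1}^*$ is projective as a left $\Gamma_{n-1}$-module if and only if each entry $M_{i,n}$ is projective as a left $R_i$-module. The forward implication is fine, but the converse is \emph{false}, and no bookkeeping over the $\psi_{ilj}$ will repair it. Take $n=3$, $R_1=R_2=R_3=k$, $M_{12}=M_{23}=k$, and $M_{13}=0$ with $\psi_{123}=0$. All $M_{ij}$ are f.g.\ projective on both sides, yet $M_2^*=\lbr 0\\ k\rbr$ is the simple top of the $\Gamma_2$-projective $\lbr k\\ k\rbr$, hence not projective over $\Gamma_2=\lbrt k & k\\ 0 & k\rbr$. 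Here $\Gamma$ has global dimension $2$, so it is Gorenstein but not $1$-Gorenstein: entrywise projectivity of the $M_{i,n}$ does not force $M_{n-1}^*$ projective over $\Gamma_{n-1}$, and this is precisely why part~(2) has to impose that projectivity as an extra hypothesis beyond part~(1). If your equivalence were true, parts (1) and (2) would coincide.

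The underlying slip is that \cite[Theorem 3.3]{XChen} gives finite projective dimension of $M_{n-1}^*$ on both sides, not projectivity. Over $R_n$ (quasi-Frobenius) these coincide, but over $\Gamma_{n-1}$ they do not. The correct componentwise translation is: $\pd_{\Gamma_{n-1}}M_{n-1}^*<\infty$ iff $\pd_{R_i}M_{i,n}<\infty$ for each $i$ (Corollary~\ref{ProjFiniteMatModOverGore} in one direction, Lemma~\ref{XChen2} in the other), and only then does quasi-Frobeniusness of $R_i$ upgrade each $M_{i,n}$ to projective. Once you replace ``projective over $\Gamma_{n-1}$'' by ``finite projective dimension over $\Gamma_{n-1}$'' throughout part~(1) and use that translation, the induction for (1) does close. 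Your outline of part~(2), which hinges on the sharpened selfinjective-dimension estimate (the paper's Proposition~\ref{GD}) and genuinely uses projectivity of $M_{n-1}^*$ over $\Gamma_{n-1}$ as a separate hypothesis, is sound in spirit, but it rests on part~(1) being fixed first.

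Finally, a smaller point: in the ``only if'' direction of~(1) you apply \cite[Theorem 3.3]{XChen} to $\lbrt \Gamma_{n-1}&M_{n-1}^*\\ 0&R_n\rbr$, but that theorem assumes both diagonal rings are Gorenstein; from ``$\Gamma$ Gorenstein'' you must first deduce that $\Gamma_{n-1}$ is Gorenstein (the paper's Lemma~\ref{LE}). You should make that step explicit.
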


For the proof, (1) is a special case of Proposition \ref{XChen1}.
Thanks to (1), the ``only if" part of (2) is a special case of
Proposition \ref{LG1}(1), and the ``if"  part is a special case of
Proposition \ref{LG1}(2). Here, we use the fact that a module over a
quasi-Frobenius ring is projective provided that it has finite
projective dimension.

Let $k$ be a field and let $\C$ be a finite EI category. Here, the
EI condition means that any endomorphism in $\C$ is an isomorphism.
For an object $x$, we denote by $\Ac(x)$ and $k\Ac(x)$ the group of
endomorphisms of $x$ and the group algebra, respectively. We observe
that for any two objects $x$ and $y$ in $\C$, $k{\rm Hom}_{\C}(x,y)$
is a $k{\rm Aut}_{\C}(y)$-$k{\rm Aut}_{\C}(x)$-bimodule. We say that
a finite EI category $\C$ is \emph{projective over $k$} if each
bimodule $k{\rm Hom}_{\C}(x,y)$ is projective on both sides.

We denote by $k\C$ the category algebra of $\C$. We mention that
category algebras play an important role in the representation
theory of finite groups; see \cite{PWebb1,PWebb2}. The following
result is an application of Proposition \ref{XChen1'}(1), where we
use the fact that the category algebra is isomorphic to a certain
upper triangular matrix algebra.

\begin{prop}\label{G}
 Let $k$ be a field and $\C$ be a finite EI category. Then the category algebra $k\C$ is Gorenstein if and only if $\C$ is projective over $k$.
\end{prop}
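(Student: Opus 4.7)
The plan is to identify the category algebra $k\C$ with an upper triangular matrix algebra $\Gamma$ of the form considered in Proposition \ref{XChen1'} and then invoke part (1) of that result.

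First I would replace $\C$ by a skeleton if necessary, and enumerate its objects as $x_1,\dots,x_n$ in such a way that any non-invertible morphism $x_j\to x_i$ can exist only when $j>i$. This is possible because the EI hypothesis makes the relation ``$x\preceq y$ if $\Hom_{\C}(x,y)\neq\emptyset$'' a partial order on isomorphism classes: given $f:x\to y$ and $g:y\to x$, the composite $gf$ is an endomorphism hence an isomorphism, forcing $f$ to split, and symmetrically, so $x\cong y$; any finite partial order admits a linear extension. Setting $R_i=k\Ac(x_i)$ and, for $i<j$, $M_{ij}=k\Hom_{\C}(x_j,x_i)$ with the natural $R_i$-$R_j$-bimodule structure by post- and pre-composition, composition in $\C$ supplies the maps $\psi_{ilj}$ and associativity provides the required compatibility. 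The identities $e_i=\id_{x_i}$ form a complete system of orthogonal idempotents in $k\C$ with $e_ik\C e_j=k\Hom_{\C}(x_j,x_i)$, which vanishes when $i>j$ by the choice of ordering; the map sending a morphism $f:x_j\to x_i$ to the elementary matrix with $f$ in position $(i,j)$ then extends to a $k$-algebra isomorphism $k\C\cong\Gamma$.

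Once this identification is in place, each $R_i=k\Ac(x_i)$ is the group algebra of a finite group over a field, hence Frobenius and in particular quasi-Frobenius; each $M_{ij}$ is finite-dimensional over $k$ (since $\C$ is a finite category), so automatically finitely generated on both sides. Proposition \ref{XChen1'}(1) thus asserts that $k\C\cong\Gamma$ is Gorenstein if and only if each $M_{ij}$ with $i<j$ is projective on both sides. Since the diagonal bimodules $k\Ac(x_i)=k\Hom_{\C}(x_i,x_i)$ are always free over themselves, and the bimodules $k\Hom_{\C}(x_i,x_j)$ with $i<j$ vanish by the ordering, this condition is exactly that $\C$ be projective over $k$.

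The main obstacle I expect is the careful bookkeeping in identifying $k\C$ with $\Gamma$: matching the bimodule conventions correctly and checking that composition in $\C$ encodes precisely the multiplication rule of the upper triangular matrix ring in the sense of the excerpt. Once that is settled, the result is immediate from Proposition \ref{XChen1'}(1).
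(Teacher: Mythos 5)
Your proposal is correct and follows essentially the same route as the paper: reduce to a skeleton, order the objects so that $k\C$ becomes the upper triangular matrix algebra $\Gamma_{\C}$ with group algebras $R_i=k\Ac(x_i)$ on the diagonal, note these are quasi-Frobenius (selfinjective), and invoke Proposition~\ref{XChen1'}(1) (equivalently Proposition~\ref{XChen1}). You simply spell out the identification $k\C\cong\Gamma_{\C}$ and the existence of a compatible linear order, details the paper delegates to Section~4.
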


The concept of a finite \emph{free} EI category is introduced in
\cite{LLi}. It is due to \cite[Theorem 5.3]{LLi} that the category
algebra $k\C$ is hereditary if and only if $\C$ is a finite free EI
category satisfying that the endomorphism groups of all objects have
orders invertible in $k$. The following result is a Gorenstein
analogue to \cite[Theorem 5.3]{LLi}.

\begin{thm}\label{1G}
 Let $k$ be a field and $\C$ be a finite EI category. Then the category algebra $k\C$ is $1$-Gorenstein if and only if  $\C$ is free and projective over $k$.
\end{thm}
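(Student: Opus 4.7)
The plan is to realize $k\C$ as an upper triangular matrix algebra and then apply Proposition \ref{XChen1'}(2). Choose a linear ordering $x_1,\ldots,x_n$ of (a skeleton of) $\C$ that refines the natural preorder on objects; then $k\C$ is isomorphic to an upper triangular matrix algebra $\Gamma$ with diagonal entries $R_i = k\Ac(x_i)$ and off-diagonal bimodules built from the $k$-spans of the morphism sets of $\C$, with structure maps $\psi_{ilj}$ induced by composition. Each $R_i$, being the group algebra of a finite group over the field $k$, is quasi-Frobenius, so Proposition \ref{XChen1'}(2) applies: $k\C$ is $1$-Gorenstein if and only if (i) every off-diagonal bimodule is finitely generated projective on both sides and (ii) each left $\Gamma_t$-module $M_t^*$ is projective for $1\leq t\leq n-1$. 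By definition, (i) is precisely the condition that $\C$ is projective over $k$, and by Proposition \ref{G} is already sufficient for $k\C$ to be Gorenstein. Thus the theorem reduces, under the standing assumption that $\C$ is projective over $k$, to showing that (ii) is equivalent to $\C$ being free.

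Here $\Gamma_t$ is identified with the category algebra of the full subcategory $\C_{\leq t}$ on $\{x_1,\ldots,x_t\}$, and $M_t^*$ is the $\Gamma_t$-module whose underlying $k$-space is the direct sum of the morphism spaces between $x_{t+1}$ and the smaller objects, with $\Gamma_t$ acting by composition. For the ``if'' direction, I would use the unique-factorization property defining freeness: each such morphism admits a decomposition as $u\circ h$, unique up to the action of $\Ac(y)$, where $u\colon y\to x_{t+1}$ is unfactorizable and $h$ lies in $\C_{\leq t}$. Grouping the summands of $M_t^*$ by the $\Ac(y)$-orbit of the unfactorizable part $u$ yields a direct sum decomposition into modules of the form $(\Gamma_t\, \id_y)\otimes_{k\Ac(y)} V$, where $V$ is a $k\Ac(y)$-module spanned by the orbit. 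Condition (i) forces each $V$ to be projective; each summand is then a projective $\Gamma_t$-module, and hence so is $M_t^*$.

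The converse is the reverse engineering. Starting from a projective decomposition of $M_t^*$ into indecomposable summands, each summand must arise as an induction of an indecomposable projective $k\Ac(y)$-module for some $y\in\C_{\leq t}$, since these exhaust the indecomposable projective $\Gamma_t$-modules; reading off this decomposition produces, for every morphism into $x_{t+1}$ from a smaller object, a canonical factorization through an unfactorizable morphism, unique up to the relevant $\Ac(y)$-action. Iterating this over all $t$ recovers the freeness of $\C$. The principal obstacle is this converse: the module-theoretic decomposition readily yields factorizations, but one must carefully match it against the combinatorial axiom from \cite{LLi} to verify that the factorizations obtained are genuinely unique up to automorphism, not merely existing, and that the uniqueness data from different $t$ are mutually consistent.
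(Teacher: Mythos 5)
Your overall strategy matches the paper's exactly: pass to the triangular matrix algebra $\Gamma_{\C}$, invoke Proposition~\ref{XChen1'}(2) using that each $R_i=k\Ac(x_i)$ is quasi-Frobenius, and thereby reduce the theorem to the statement that, under the standing assumption that $\C$ is projective, each $M_t^*$ is a projective left $\Gamma_t$-module if and only if $\C$ is free. That reduction is precisely Proposition~\ref{MatInterpOfFreeness}, which you are in effect re-proving. Your ``if'' direction is the paper's argument in outline: the unique factorization property furnishes, for each $t$, an identification of $M_t^*$ with $\bigoplus_{l\le t} i_l(M^0_{l,t+1})$ (the paper's map $\Phi$), and projectivity of $\C$ (via Lemma~\ref{GT}/Corollary~\ref{PI}) makes each induced piece projective over $\Gamma_t$.

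Your converse, however, is not a proof, and you say as much. The obstacle you name is real: an arbitrary decomposition of a projective $M_t^*$ into indecomposable projective $\Gamma_t$-modules produces factorizations of morphisms but carries no canonical comparison with the combinatorial data, so you cannot extract the disjointness of Lemma~\ref{DU} or the up-to-automorphism uniqueness demanded by the UFP, nor verify consistency across different $t$. The paper's fix is to replace ``arbitrary decomposition'' by a \emph{canonical} one: Lemma~\ref{PC} shows that the explicit surjection $\Phi\colon \bigoplus_{l} i_l(M^0_{l,t+1})\to M_t^*$ is a projective cover (by computing tops on both sides). Hence, if $M_t^*$ is projective, $\Phi$ is forced to be an isomorphism, and the resulting injectivity of the component maps $\xi_{iln}\colon M_{il}\otimes_{R_l}M^0_{ln}\to M_{in}$ (together with the disjointness of their images) translates directly into the statement that $\C$ is free from the relevant object (Lemma~\ref{EC}); Lemma~\ref{EC2} then globalizes object by object. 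The projective cover is exactly the canonical link between module decomposition and morphism combinatorics that your converse is missing; without it the argument stays heuristic.
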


Indeed, we may deduce \cite[Theorem 5.3]{LLi} from Theorem \ref{1G},
using the well-known fact that a finite dimensional algebra is
hereditary if and only if it is $1$-Gorenstein with finite global
dimension; see Example \ref{EX}.

This paper is organized as follows. In Section 2,
   we give an explicit description of projective modules
  and injective modules over an upper triangular matrix ring.
  In Section 3, we give conditions on when a triangular matrix ring is Gorenstein of
a given selfinjective dimension, and prove
Proposition~\ref{XChen1'}. In Section 4, we give a new
characterization of finite free EI categories in terms of the
corresponding triangular matrix algebras; see Proposition
\ref{MatInterpOfFreeness}.
 In Section 5, we prove Proposition~\ref{G} and Theorem~\ref{1G}.

\section{Modules over triangular matrix rings}
In this section, we describe explicitly projective modules and
injective modules over an upper triangular matrix ring.

Let $R_1$ and $R_2$ be two rings, $M_{12}$ an $R_1$-$R_2$-bimodule.
We consider the corresponding upper triangular matrix ring
$\Gamma=\lbrt R_1& M_{12}\\ 0 & R_2 \rbr$.

Recall the description of left $\Gamma$-modules via column vectors.
Let $X_i$ be a left $R_i$-module, $i=1,2$, and let $\varphi_{12}:
M_{12}\otimes_{R_2} X_2 \rightarrow X_1$ be a morphism of left
$R_1$-modules. We define the left $\Gamma$-module structure on  $X=\lbr X_1\\
X_2\rbr $ by the following identity
 \[\lbrt r_1& m_{12}\\ 0 & r_2  \rbr \lbr x_1\\
x_2\rbr =\lbr r_1x_1+\varphi_{12}(m_{12}\otimes x_2) \\ r_2x_2
\rbr.\] We mention that the left $\Gamma$-module structure on  $X=\lbr X_1\\
X_2\rbr $ depends on the morphism $\varphi_{12}$. Indeed, every left
$\Gamma$-module arises in
 this way; compare
\cite[III.2, Proposition 2.1]{ARS}. A morphism $\lbr X_1\\
X_2\rbr \rightarrow \lbr X'_1\\
X'_2\rbr $ of $\Gamma$-modules is denoted by $\lbr f_1\\
f_2\rbr $, where $f_i:X_i \rightarrow X'_i$ is an $R_i$-morphism
satisfying
\begin{equation}\label{A}
f_1\circ \varphi_{12}=\varphi'_{12}\circ ({\rm Id}_{M_{12}}\otimes
f_2).
\end{equation}
Dually, we have the description of right $\Gamma$-modules via row
vectors.

Let $M$ be a left module over a ring $R$. We denote by $\pd_R M$ and
$\id_R M$ the projective dimension and the injective dimension of
$M$, respectively.

The following lemma is well-known; compare \cite[III, Proposition
2.3 and 2.5]{ARS} and \cite[Lemma 1.2]{XZ}.

\begin{lem}\label{ExtFun}
Let $\Gamma=\lbrt R_1& M_{12}\\ 0 & R_2  \rbr$ be an upper
triangular matrix ring, and let $X=\lbr X_1\\ X_2\rbr $  be a left
$\Gamma$-module as above. Then the following statements hold.
\begin{enumerate}
\item $\pd_{R_1} X_1=\pd_\Gamma \lbr X_1\\ 0\rbr$ and $\id_{R_2}
X_2=\id_\Gamma \lbr 0\\ X_2\rbr$.

\item $\pd_{R_2} X_2\leq \pd_\Gamma \lbr X_1\\ X_2\rbr$ and $\id_{R_1} X_1\leq \id_\Gamma \lbr X_1\\
 X_2\rbr$. \hfill $\square$
\end{enumerate}
\end{lem}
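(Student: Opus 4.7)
The plan is to organize the proof around the pair of full exact embeddings $\iota_i \colon R_i\text{-Mod} \hookrightarrow \Gamma\text{-Mod}$ given by $\iota_1(N_1) = \lbr N_1 \\ 0 \rbr$ and $\iota_2(N_2) = \lbr 0 \\ N_2 \rbr$, together with the two exact ``projection'' functors $\pi_i \colon \Gamma\text{-Mod} \to R_i\text{-Mod}$ sending $\lbr Y_1 \\ Y_2 \rbr$ to $Y_i$. The description of morphisms in (\ref{A}) produces the two natural identifications
\[\Hom_\Gamma(\iota_1 N_1, Y) \cong \Hom_{R_1}(N_1, Y_1) \quad \text{and} \quad \Hom_\Gamma(Y, \iota_2 N_2) \cong \Hom_{R_2}(Y_2, N_2),\]
so that $(\iota_1, \pi_1)$ and $(\pi_2, \iota_2)$ are adjoint pairs. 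All four functors are exact; hence each adjoint whose partner is also exact must preserve either projectives or injectives. Explicitly, $\iota_1$ and $\pi_2$ send projectives to projectives, while $\iota_2$ and $\pi_1$ send injectives to injectives.

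For the first equality in (1), applying the exact, projective-preserving functor $\iota_1$ to an $R_1$-projective resolution $P_\bullet \to X_1$ yields a $\Gamma$-projective resolution of $\iota_1 X_1$, which gives $\pd_\Gamma \iota_1 X_1 \leq \pd_{R_1} X_1$. For the reverse inequality, full faithfulness of $\iota_1$ combined with the same resolution produces the isomorphism $\Ext^i_{R_1}(X_1, N) \cong \Ext^i_\Gamma(\iota_1 X_1, \iota_1 N)$ for every $R_1$-module $N$ (both sides are computed by $H^i$ of the complex $\Hom_{R_1}(P_\bullet, N) = \Hom_\Gamma(\iota_1 P_\bullet, \iota_1 N)$); choosing $N$ that realises $\pd_{R_1} X_1$ finishes the job. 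The statement $\id_{R_2} X_2 = \id_\Gamma \iota_2 X_2$ is proved by the dual argument, using $\iota_2$ together with injective resolutions.

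For (2), I apply the exact projective-preserving functor $\pi_2$ to a $\Gamma$-projective resolution of $\lbr X_1 \\ X_2 \rbr$ to obtain an $R_2$-projective resolution of $X_2$, establishing $\pd_{R_2} X_2 \leq \pd_\Gamma \lbr X_1 \\ X_2 \rbr$; dually, $\pi_1$ applied to a $\Gamma$-injective resolution yields $\id_{R_1} X_1 \leq \id_\Gamma \lbr X_1 \\ X_2 \rbr$. The only point in the argument that requires a moment's thought is the claim that $\pi_2$ preserves projectives; this follows formally from $\pi_2 \dashv \iota_2$ with $\iota_2$ exact, or directly from $\pi_2(\Gamma e_1) = 0$ and $\pi_2(\Gamma e_2) = R_2$. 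The observation $\pi_1(\Gamma e_2) = M_{12}$, which need not be $R_1$-projective, shows why $\pi_1$ fails to preserve projectives and explains why part (2) gives only an inequality rather than an equality.
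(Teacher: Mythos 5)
The paper does not actually prove Lemma \ref{ExtFun}; it is stated as ``well-known'' with citations to \cite[III, Propositions 2.3 and 2.5]{ARS} and \cite[Lemma 1.2]{XZ}, so there is no in-paper proof to compare against. Your argument is correct and is essentially the standard one in those references: you identify the two adjoint pairs $\iota_1\dashv\pi_1$ and $\pi_2\dashv\iota_2$ (both verified directly from the compatibility condition (\ref{A})), observe that all four functors are exact, and then extract the four preservation properties formally -- a left adjoint of an exact functor preserves projectives, a right adjoint of an exact functor preserves injectives. Applying $\iota_1$ (resp.\ $\iota_2$) to a resolution gives one inequality in (1), full faithfulness plus the Ext comparison gives the other, and applying the exact, projective-preserving $\pi_2$ (resp.\ injective-preserving $\pi_1$) to a $\Gamma$-resolution gives (2). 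Two very minor points: when $\pd_{R_1}X_1=\infty$ the phrase ``choosing $N$ that realises $\pd_{R_1}X_1$'' should be read as ``for each $m$ choose $N_m$ with $\Ext^m_{R_1}(X_1,N_m)\neq 0$'', which the Ext isomorphism handles just as well; and the closing remark that $\pi_1(\Gamma e_2)=M_{12}$ need not be $R_1$-projective is a genuinely useful explanation of why (2) is only an inequality.
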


Let $\Gamma=\lbrt R_1& M_{12}\\ 0 & R_2  \rbr$ be as above. For each
left $R_1$-module $X_1$, we associate two left $\Gamma$-modules $i_1(X_1)=\lbr X_1\\
0\rbr $ and $j_1(X_1)=\lbr X_1\\ \Hom_{R_1}(M_{12},
 X_1) \rbr $, where the $\Gamma$-module structure on $j_1(X_1)$ is
determined by the evaluation map
$M_{12}\otimes_{R_2}\Hom_{R_1}(M_{12},
 X_1)\rightarrow X_1$. For each
left $R_2$-module $X_2$, we associate two left $\Gamma$-modules
$i_2(X_2)=\lbr M_{12}\otimes_{R_2}X_2\\ X_2\rbr
 $ and $j_2(X_2)=\lbr 0\\ X_2 \rbr $, where the $\Gamma$-module structure on $i_2(X_2)$ is
determined by the identity map on $M_{12}\otimes_{R_2}
 X_2$.

The following result seems to be well known; compare \cite[III,
Proposition 2.5]{ARS}. For completeness, we include a proof.

\begin{lem}\label{PIOrder2}
Let $\Gamma=\lbrt R_1& M_{12}\\ 0 & R_2  \rbr$ be an upper
triangular matrix ring. Then we have the following statements.
\begin{enumerate}
\item A left $\Gamma$-module is projective if and only if it is
isomorphic to $i_1(P_1)\oplus i_2(P_2)$ for some projective left
$R_1$-module $P_1$ and projective left $R_2$-module $P_2$.

\item A left $\Gamma$-module is injective if and only if it is
isomorphic to $j_1(I_1)\oplus j_2(I_2)$ for some injective left
$R_1$-module $I_1$ and injective left $R_2$-module $I_2$.
\end{enumerate}
\end{lem}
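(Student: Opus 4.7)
The plan is to exploit the four functors $i_1, i_2, j_1, j_2$ together with the projection functors $e_\ell X = X_\ell$ for $\ell=1,2$. A direct check gives natural isomorphisms $\Hom_\Gamma(i_\ell(Y), X)\cong \Hom_{R_\ell}(Y, X_\ell)$ and $\Hom_\Gamma(X, j_\ell(Y))\cong \Hom_{R_\ell}(X_\ell, Y)$, establishing adjunctions $i_\ell\dashv e_\ell\dashv j_\ell$. Since $e_1$ and $e_2$ are plainly exact, it follows formally that $i_1, i_2$ preserve projectives and $j_1, j_2$ preserve injectives. This disposes of the ``if'' direction in both parts.

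For the ``only if'' in (1), let $P = \lbr A \\ B \rbr$ be a projective left $\Gamma$-module with structure map $\varphi_P\colon M_{12}\otimes_{R_2} B \to A$. I would realise $P$ as a direct summand of a free module $\Gamma^{(\Lambda)} \cong i_1(R_1^{(\Lambda)})\oplus i_2(R_2^{(\Lambda)})$, whose structure map is the canonical inclusion $M_{12}^{(\Lambda)}\hookrightarrow R_1^{(\Lambda)}\oplus M_{12}^{(\Lambda)}$---a split monomorphism of left $R_1$-modules. Since any $\Gamma$-submodule decomposition of $\Gamma^{(\Lambda)}$ forces the structure map to be block-diagonal, $\varphi_P$ itself is a split monomorphism; a chosen retraction produces a splitting $A\cong (M_{12}\otimes_{R_2} B)\oplus A''$ which identifies $P$ with $i_1(A'')\oplus i_2(B)$. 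Finally $B = e_2 P$ is a summand of $R_2^{(\Lambda)}$, hence $R_2$-projective, and Lemma~\ref{ExtFun}(1) applied to the $\Gamma$-summand $i_1(A'')$ of $P$ gives $\pd_{R_1} A'' = \pd_\Gamma i_1(A'') = 0$.

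The ``only if'' in (2) is dual. Given injective $I = \lbr X_1 \\ X_2 \rbr$, write $\widetilde{\varphi}_I\colon X_2\to \Hom_{R_1}(M_{12},X_1)$ for the tensor--hom adjoint of its structure map and pick injective envelopes $X_\ell\hookrightarrow E_\ell$ over $R_\ell$; these induce a $\Gamma$-monomorphism $I\hookrightarrow j_1(E_1)\oplus j_2(E_2)$. The target is $\Gamma$-injective by the ``if'' direction, so $I$ is a direct summand of it, and the adjoint structure map of the target is the canonical split epimorphism $\Hom_{R_1}(M_{12},E_1)\oplus E_2\twoheadrightarrow \Hom_{R_1}(M_{12},E_1)$. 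Block-diagonality again transfers a splitting to $\widetilde{\varphi}_I$, yielding $X_2\cong \Hom_{R_1}(M_{12},X_1)\oplus \ker\widetilde{\varphi}_I$ and hence $I\cong j_1(X_1)\oplus j_2(\ker\widetilde{\varphi}_I)$; Lemma~\ref{ExtFun} then forces $X_1$ and $\ker\widetilde{\varphi}_I$ to be injective over $R_1$ and $R_2$ respectively.

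The main obstacle throughout is the repeated ``inheritance'' claim: for any decomposition $N = N'\oplus N''$ of a $\Gamma$-module as $\Gamma$-submodules, the structure map of $N$ decomposes block-diagonally as $\varphi_{N'}\oplus \varphi_{N''}$, so that split monomorphisms and split epimorphisms descend to the summands. This is a routine check from the definition of a $\Gamma$-submodule; once granted, the rest is bookkeeping.
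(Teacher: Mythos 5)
Your proof is correct, and it reaches the same decomposition $P\cong i_1(A'')\oplus i_2(B)$ that the paper does, but by a genuinely different mechanism. For the ``if'' direction the paper exhibits $i_1(P_1)\oplus i_2(P_2)$ as $\Gamma\otimes_{\Gamma'}P'$ for $\Gamma'=\lbrt R_1 & 0\\ 0 & R_2\rbr$, which is a concrete instance of the adjunction argument you give; those two formulations buy essentially the same thing. The real divergence is in the ``only if'' direction. The paper works internally: it takes the two natural projections of $i_2(X_2)$ and $X$ onto $\lbr 0\\ X_2\rbr$, lifts each through the other using projectivity, and uses equation~(\ref{A}) to see that the resulting composite $\beta\circ\alpha$ is the identity on $i_2(X_2)$, so $i_2(X_2)$ splits off and the cokernel is of the form $i_1(X_1')$. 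You instead embed $P$ as a summand of a free module $\Gamma^{(\Lambda)}$, observe that the structure map of $\Gamma^{(\Lambda)}$ is a split $R_1$-monomorphism, and use block-diagonality of structure maps along a $\Gamma$-module decomposition (together with the matrix argument that a split mono $\varphi'\oplus\varphi''$ forces each $\varphi'$, $\varphi''$ to be split mono) to conclude $\varphi_P$ splits. This is a clean ``summand-of-free'' argument and avoids the explicit lifting-diagram bookkeeping, at the cost of having to make the block-diagonality observation precise (which, as you note, is routine). A small bonus of your write-up is that you actually carry out the dual argument for injectives, where the paper only proves (1) and appeals to duality; your use of the tensor-hom adjoint $\widetilde{\varphi}_I$ and the split \emph{epi} version of the block-diagonality lemma is exactly the right dual formulation.
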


\begin{proof}
We only prove (1). For the ``if" part, we consider the
subring $\Gamma'=\lbrt R_1& 0\\ 0 & R_2  \rbr$ of $\Gamma$. Then $P'=\lbr P_1\\
P_2\rbr $ is a projective left $\Gamma'$-module. We observe that
$i_1(P_1)\oplus i_2(P_2)$ isomorphic to $\Gamma\otimes_{\Gamma'}P'$.
It follows that the left $\Gamma$-module $i_1(P_1)\oplus i_2(P_2)$
is projective.

For the ``only if" part, let $X=\lbr X_1\\
X_2\rbr $ be a projective left $\Gamma$-module. Then by
Lemma~\ref{ExtFun}(2) $X_2$ is a projective left $R_2$-module, and
by above $i_2(X_2)$ is a projective left
$\Gamma$-module. Consider the nature projections $i_2(X_2)\overset{\pi_1}{\twoheadrightarrow} \lbr 0 \\
X_2 \rbr$ and $X \overset{\pi_2}{\twoheadrightarrow} \lbr 0 \\
X_2 \rbr$. Since $i_2(X_2)$ and $X$ are projective left
$\Gamma$-modules, we have two morphisms
$i_2(X_2)\overset{\alpha}{\rightarrow} X$ and $X
\overset{\beta}{\rightarrow} i_2(X_2)$ satisfying
$\pi_1=\pi_2\circ\alpha$ and $\pi_2=\pi_1\circ\beta$. Therefore,
$\pi_1\circ \beta\circ\alpha=\pi_1$. By (\ref{A}), we observe that
$\beta\circ\alpha= {\rm Id}_{i_2(X_2)}$. It follows that $\alpha$ is
a split monomorphism. In particular, $X$ is isomorphic to
$i_2(X_2)\oplus {\rm Coker}\alpha$. We observe that ${\rm
Coker}\alpha$ is of the form $\lbr X'_1\\ 0\\ \rbr=i_1(X'_1)$ for
some left $R_1$-module $X'_1$. Since ${\rm Coker}\alpha$ is a
projective left $\Gamma$-module, we have that $X'_1$ is a projective
left $R_1$-module by Lemma~\ref{ExtFun}(1). Then we are done.
\end{proof}

 We now extend the above results to an upper triangular matrix ring of an arbitrary order.

Let $n\geq 2$. Let $R_i$ be a ring for $1\leq i\leq n$, and let
$M_{ij}$ be an $R_i$-$R_j$-bimodule for $1\leq i< j\leq n$. Let
$\psi_{ilj}: M_{il}\otimes_{R_l} M_{lj} \rightarrow M_{ij}$ be
morphisms of $R_i$-$R_j$-bimodules satisfying
\[\psi_{ijt}(\psi_{ilj}(m_{il}\otimes m_{lj})\otimes m_{jt})=\psi_{ilt}(m_{il}\otimes \psi_{ljt}(m_{lj}\otimes m_{jt}))\]
for $1\leq i<l<j<t\leq n$.
Then we have the corresponding $n\times n$ upper
triangular matrix ring $\Gamma=\left(
          \begin{array}{cccc}
            R_1 & M_{12} & \cdots & M_{1n} \\
             & R_2 & \cdots & M_{2n} \\
             &  & \ddots & \vdots \\
                &  &  & R_n \\
          \end{array}
        \right)$.
The elements of $\Gamma$, denoted by $(m_{ij})$, are $n\times n$
upper triangular matrices, where $m_{ij}\in M_{ij}$ for $i<j$ and
$m_{ii}\in R_i$. The multiplication is induced by those morphisms
$\psi_{ilj}$. We write $\psi_{ilj}(m_{il}\otimes m_{lj})$ as
$m_{il}m_{lj}$.

We describe left $\Gamma$-modules via column vectors. Let $X_i$ be a
left $R_i$-module for $1\leq i\leq n$, and $\varphi_{jl}:
M_{jl}\otimes_{R_l} X_l \rightarrow X_j$ be a morphism of left
$R_j$-modules satisfying
\[\varphi_{ij}\circ({\rm Id}_{M_{ij}}\otimes \varphi_{jl})=\varphi_{il}\circ(\psi_{ijl}\otimes {\rm Id}_{X_l})\]
for $1\leq i<j<l\leq n$.
Set $X=\lbr X_1 \\
 \vdots \\ X_n \\ \rbr $. Denote the elements of
  $X$ by $(x_i)$, where $x_i\in X_i$ for $1\leq i\leq n$. We write
  $\varphi_{ij}(m_{ij}\otimes x_j)$ as $m_{ij}x_j$. Then we define the left $\Gamma$-module structure on $X$ by the following
identity
 \[(m_{ij}) (x_i)=(\sum_{l=i}^n m_{il}x_l).\]
Indeed, every left $\Gamma$-module arises in
 this way. 
 Dually, we have the description
of right $\Gamma$-modules via row vectors.

Let $\Gamma$ be an upper triangular matrix ring of order $n$ as
above. We consider the subring $\Gamma^D$= diag($R_1,\cdots R_n$) of
$\Gamma$ consisting of diagonal matrices. Observe that $\Gamma^D$ is
isomorphic to the direct product ring $\prod_{i=1}^n R_i$. We view
an $R_i$-module as a $\Gamma^D$-module via the projection
$\Gamma^D\twoheadrightarrow R_i$. We denote the $i$-th column of
$\Gamma$ by $C_i$ which is a $\Gamma$-$R_i$-bimodule, and denote the
$i$-th row of $\Gamma$ by $H_i$ which is an $R_i$-$\Gamma$-bimodule.

Let $1\leq t\leq n$, and let $A$ be a left $R_t$-module. We consider
the $\Gamma$-module $i_t(A)=C_t \otimes_{R_t} A$.
 Observe an isomorphism
\begin{align}\label{ISO}
\Gamma \otimes_{\Gamma^D} A \overset{\sim}{\longrightarrow} i_t(A)
\end{align} of $\Gamma$-modules sending $(m_{ij})\otimes a$ to
$(m_{it})\otimes a$. We describe $i_t(A)$ as $\left(
    \begin{array}{c}
      M_{1t} \otimes_{R_t} A \\
      \vdots \\
      R_t \otimes_{R_t} A \\
      \vdots \\
      0 \\
    \end{array}
  \right)$, where the corresponding morphisms $\varphi_{jl}=\psi_{jlt}\otimes {\rm Id}_{A}$ for
  $l<t$, $\varphi_{jt}: M_{jt}\otimes_{R_t}(R_t\otimes_{R_t}A)\rightarrow
  M_{jt}\otimes_{R_t}A$ is the canonical isomorphism, and $\varphi_{jl}=0$ for $l>t$.

We consider the $\Gamma$-module $j_t(A)=\Hom_{R_t}(H_t, A)$. Observe
an isomorphism
\begin{align}\label{ISO1}
\Hom_{\Gamma^D}(\Gamma, A)\overset{\sim}{\longrightarrow} j_t(A)
\end{align} of
$\Gamma$-modules sending $f$ to its restriction on $H_t$. We
describe $j_t(A)$ as $\left(
    \begin{array}{c}
     0\\
     \vdots \\
     {\Hom_{R_t}}(R_t, A)\\
      \vdots \\
     {\Hom_{R_t}}(M_{tn}, A)\\
    \end{array}
  \right)$, where the corresponding morphisms $\varphi_{jl}: M_{jl}\otimes_{R_l} {\Hom_{R_t}}(M_{tl},
A)\rightarrow {\Hom_{R_t}}(M_{tj}, A)$ are given by
$\varphi_{jl}(m_{jl}\otimes f)(m_{tj})=f(m_{tj}m_{jl})$ for $j>t$,
$\varphi_{tl}: M_{tl}\otimes_{R_l} {\Hom_{R_t}}(M_{tl},
A)\rightarrow {\Hom_{R_t}}(R_{t}, A)$ is the evaluation map, and
$\varphi_{jl}=0$ for $j<t$.


The following results give an explicit description of projective
modules and injective modules over an upper triangular matrix ring.

\begin{prop}\label{ProjInjOverTria}
Let $\Gamma$ be an upper triangular matrix ring of order $n$. Then
we have the following statements.
\begin{enumerate}
\item A left $\Gamma$-module is projective if and only if it is
isomorphic to $\bigoplus\limits_{t=1}^n i_t(P_t)$ for some
projective left $R_t$-module $P_t$, $1\leq t\leq n$.

\item A left $\Gamma$-module is injective if and only if it is
isomorphic to $\bigoplus\limits_{t=1}^n j_t(I_t)$ for some injective
left $R_t$-module $I_t$, $1\leq t\leq n$.
\end{enumerate}
\end{prop}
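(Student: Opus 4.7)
The plan is to prove both parts by induction on the order $n$, with base case $n=2$ handled by Lemma~\ref{PIOrder2}. For the inductive step of (1), I would view $\Gamma$ as a $2\times 2$ block upper triangular matrix ring $\lbrt \Gamma_{n-1} & M_{n-1}^* \\ 0 & R_n \rbr$, where $\Gamma_{n-1}$ is the leading principal $(n-1)\times(n-1)$ submatrix and $M_{n-1}^*$ is the column bimodule from the introduction. Applying Lemma~\ref{PIOrder2}(1) to this block decomposition, any projective left $\Gamma$-module splits as $i_1'(P)\oplus i_2'(P_n)$, where $P$ is a projective $\Gamma_{n-1}$-module, $P_n$ is a projective $R_n$-module, and $i_1', i_2'$ denote the order-$2$ block functors for this decomposition. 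The induction hypothesis applied to $\Gamma_{n-1}$ further decomposes $P \cong \bigoplus_{t=1}^{n-1} i_t^{\Gamma_{n-1}}(P_t)$ for some projective $R_t$-modules $P_t$.

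The crux is then to verify the two $\Gamma$-module identifications $i_1'\bigl(i_t^{\Gamma_{n-1}}(P_t)\bigr) \cong i_t(P_t)$ for $1\leq t\leq n-1$ and $i_2'(P_n)\cong i_n(P_n)$. Both are direct from unwinding the column-vector descriptions of the $i_t$'s given before the proposition: the first simply adjoins a zero in the $n$-th slot, and the second reproduces the column with $M_{jn}\otimes_{R_n} P_n$ in slot $j<n$ and $R_n\otimes_{R_n} P_n$ on the bottom. Matching the structure morphisms $\varphi_{jl}$ uses nothing more than the compatibility $\psi_{ijt}\circ(\psi_{ilj}\otimes \Id)=\psi_{ilt}\circ(\Id\otimes \psi_{ljt})$. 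Running these identifications in reverse, together with Lemma~\ref{PIOrder2}(1) and the induction hypothesis, also proves the converse: any $\bigoplus_{t=1}^n i_t(P_t)$ with each $P_t$ projective over $R_t$ is a projective $\Gamma$-module.

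Part (2) proceeds dually, but I would instead exhibit $\Gamma$ as the block matrix $\lbrt R_1 & N \\ 0 & \Gamma'' \rbr$, where $\Gamma''$ is the lower-right $(n-1)\times(n-1)$ principal submatrix on $R_2,\ldots,R_n$ and $N$ is the row bimodule $(M_{12},\ldots,M_{1n})$. Lemma~\ref{PIOrder2}(2) decomposes an injective left $\Gamma$-module as $j_1'(I_1)\oplus j_2'(J)$ with $I_1$ injective over $R_1$ and $J$ injective over $\Gamma''$; the induction hypothesis then gives $J \cong \bigoplus_{t=2}^n j_t^{\Gamma''}(I_t)$ for injective $R_t$-modules $I_t$. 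The analogous identifications $j_1'(I_1)\cong j_1(I_1)$ and $j_2'\bigl(j_t^{\Gamma''}(I_t)\bigr)\cong j_t(I_t)$ for $2\leq t\leq n$ follow by the same unwinding, matching the $\varphi_{jl}$ via the evaluation maps $M_{tl}\otimes_{R_l} \Hom_{R_t}(M_{tl}, I_t) \rightarrow I_t$ built into the definition of $j_t$. Reversing the argument handles the other direction.

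The main obstacle is bookkeeping rather than conceptual: one must keep careful track of the two different block decompositions (top-left $(n-1)\times(n-1)$ for (1), bottom-right for (2)), the index shifts between $i_t^{\Gamma_{n-1}}$ and $i_t^{\Gamma}$ (respectively $j_t^{\Gamma''}$ and $j_t^{\Gamma}$), and verify that the connecting morphisms prescribed on $i_t(P_t)$ and $j_t(I_t)$ in the introduction agree on the nose with those produced by the nested block constructions. Once these identifications are carefully spelled out, both inductions close without further issue.
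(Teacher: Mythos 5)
Your proposal for (1) matches the paper's argument essentially exactly: induction on $n$ via the block decomposition $\Gamma=\left(\begin{smallmatrix}\Gamma_{n-1}&M^*_{n-1}\\0&R_n\end{smallmatrix}\right)$, Lemma~\ref{PIOrder2}(1), the inductive hypothesis for $\Gamma_{n-1}$, and the identification $i_1'\circ i_t^{\Gamma_{n-1}}\simeq i_t^{\Gamma}$. The only cosmetic difference is that the paper dispatches the ``if'' direction at once by the isomorphism $i_t(P)\simeq\Gamma\otimes_{\Gamma^D}P$ from (\ref{ISO}) rather than running the induction backward, and the paper omits (2) as similar while you spell it out via the dual block decomposition --- both perfectly sound.
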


\begin{proof}
We only prove (1). The ``if" part is obvious since $i_t$ preserves
projective modules by the isomorphism (\ref{ISO}).

For the ``only if" part, we use induction on $n$. If $n=2$, it is
Lemma~\ref{PIOrder2}(1). Assume that $n> 2$. Write $\Gamma=\lbrt
\Gamma_{n-1} & M^*_{n-1}\\ 0 & R_n \rbr$, where $\Gamma_{n-1}$ is
the $(n-1)\times
(n-1)$ leading principal submatrix of $\Gamma$ and $M^*_{n-1}=\lbr M_{1n} \\
 \vdots \\ M_{n-1n} \rbr$ is a $\Gamma_{n-1}$-$R_n$-bimodule.
Assume that $X$ is a projective left
$\Gamma$-module. Write $X=\lbr X' \\ X_n \rbr$, where $X'=\lbr X_1 \\ \vdots \\
X_{n-1} \rbr$ is a left $\Gamma_{n-1}$-module. By
Lemma~\ref{PIOrder2}(1), $X\simeq i'_1(X'_1)\oplus i_n(P_n)$, where
$X'_1$ is a projective left $\Gamma_{n-1}$-module and $P_n$ is a
projective left $R_n$-module. By induction, we have an isomorphism
$X'_1\simeq \bigoplus\limits_{t=1}^{n-1} i_t(P_t)$ of
$\Gamma_{n-1}$-modules, where $P_t$ is a projective left
$R_t$-module, $1\leq t\leq n-1$. We identify $i'_1i_t(P_t)$ with
$i_t(P_t)$. This completes the proof.
\end{proof}

\section{Gorenstein triangular matrix rings}
In this section, we study Gorenstein upper triangular matrix rings.
We give conditions such that the upper triangular matrix rings are
Gorenstein with a prescribed selfinjective dimension.

Let $R$ be a ring with a unit. Recall that $R$ is \emph{Gorenstein}
if $R$ is two-sided noetherian satisfying $\id_RR < \infty$ and $\id
R_R < \infty$. It is well known that for a Gorenstein ring $R$,
$\id_RR=\id R_R$; see \cite[Lemma A]{Zaks}. Let $m\geq 0$. A
Gorenstein ring $R$ is \emph{$m$-Gorenstein} if $\id_RR=\id R_R \leq
m$. Recall that for any $m$-Gorenstein ring $R$ and any left (or
right) $R$-module $X$, $\id_RX < \infty$ if and only if $\id_RX \leq
m$, if and only if $\pd_RX < \infty$, if and only if $\pd_RX \leq
m$; see \cite[Theorem 9]{YIw}.

The following result generalizes \cite[Lemma 2.6]{XZ} with a
different proof.

\begin{lem}\label{LE}
Let $\Gamma=\left(
              \begin{array}{cc}
                R_1 & M_{12} \\
                  0 &    R_2 \\
              \end{array}
            \right)$  be a Gorenstein upper triangular matrix ring. Then the following are
            equivalent.
\begin{enumerate}
\item  $\id (R_1){_{R_1}} < \infty$.
\item The ring $R_1$ is Gorenstein.
\item $\pd {_{R_1}}M_{12} < \infty$.
\item $\id {_{R_2}}R_2 < \infty$.
\item The ring $R_2$ is Gorenstein.
\item $\pd (M_{12})_{R_2}<\infty$.

\end{enumerate}
\end{lem}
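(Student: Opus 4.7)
The plan is to prove the equivalences as a cycle
$(3) \Rightarrow (4) \Rightarrow (5) \Rightarrow (6) \Rightarrow (1) \Rightarrow (2) \Rightarrow (3)$,
exploiting the column/row decomposition of $\Gamma$ together with the key fact recalled just before the lemma that over a Gorenstein ring a module has finite projective dimension if and only if it has finite injective dimension. Before starting the cycle, I would first record several automatic consequences of $\Gamma$ being Gorenstein. Writing $\Gamma = C_1 \oplus C_2$ as a left $\Gamma$-module with $C_1 = \lbr R_1 \\ 0 \rbr$ and $C_2 = \lbr M_{12} \\ R_2 \rbr$, Lemma \ref{ExtFun}(2) applied to $\id _\Gamma C_1$ and $\id _\Gamma C_2$ gives $\id _{R_1} R_1 < \infty$ and $\id _{R_1} M_{12} < \infty$. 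Dually, the right-module decomposition $\Gamma = H_1 \oplus H_2$ with $H_1 = (R_1,\, M_{12})$ and $H_2 = (0,\, R_2)$ yields $\id (R_2)_{R_2} < \infty$ and $\id (M_{12})_{R_2} < \infty$. Moreover, both $R_1$ and $R_2$ are quotients of $\Gamma$ by two-sided ideals, hence two-sided noetherian.

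The first half of the cycle runs on the left-module side. For $(3) \Rightarrow (4)$, I would use the short exact sequence of left $\Gamma$-modules
\[ 0 \longrightarrow \lbr M_{12} \\ 0 \rbr \longrightarrow C_2 \longrightarrow \lbr 0 \\ R_2 \rbr \longrightarrow 0, \]
in which $C_2$ is projective. Lemma \ref{ExtFun}(1) identifies $\pd _\Gamma \lbr M_{12} \\ 0 \rbr$ with $\pd _{R_1} M_{12}$, which is finite by $(3)$; the long exact $\Ext$-sequence then forces $\pd _\Gamma \lbr 0 \\ R_2 \rbr < \infty$; Gorensteinness of $\Gamma$ converts this to $\id _\Gamma \lbr 0 \\ R_2 \rbr < \infty$; and Lemma \ref{ExtFun}(1) identifies the latter with $\id _{R_2} R_2$, giving $(4)$. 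The step $(4) \Rightarrow (5)$ is then immediate from the preliminary observations, since $R_2$ is noetherian and $\id (R_2)_{R_2} < \infty$ automatically. For $(5) \Rightarrow (6)$, the automatic finiteness of $\id (M_{12})_{R_2}$ together with the Gorenstein equivalence of finite injective and finite projective dimensions over $R_2$ forces $\pd (M_{12})_{R_2} < \infty$.

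The remaining implications $(6) \Rightarrow (1) \Rightarrow (2) \Rightarrow (3)$ are exact mirror arguments on the right-module side. Specifically, $(6) \Rightarrow (1)$ comes from the short exact sequence of right $\Gamma$-modules
\[ 0 \longrightarrow (0,\, M_{12}) \longrightarrow H_1 \longrightarrow (R_1,\, 0) \longrightarrow 0, \]
with $H_1$ projective, run through the right-module analog of Lemma \ref{ExtFun}; and $(1) \Rightarrow (2) \Rightarrow (3)$ mirror $(4) \Rightarrow (5) \Rightarrow (6)$. The main obstacle is purely notational: the whole argument is symmetric between left/right and between $R_1$ and $R_2$, so one has to track carefully which slot each module occupies and whether $\id$ or $\pd$ is being followed. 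One also needs the right-module analog of Lemma \ref{ExtFun}, which is a routine duality via $\Gamma^{\mathrm{op}}$, itself an upper triangular matrix ring with the roles of $R_1$ and $R_2$ interchanged.
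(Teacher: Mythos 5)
Your proof is correct and takes essentially the same route as the paper: both hinge on the observation that $\id_{R_1}R_1$, $\id_{R_1}M_{12}$, $\id(R_2)_{R_2}$, $\id(M_{12})_{R_2}$ are automatically finite (via Lemma \ref{ExtFun}(2) applied to the projective columns/rows of $\Gamma$), both use the short exact sequence $0\to\lbr M_{12}\\0\rbr\to\lbr M_{12}\\R_2\rbr\to\lbr 0\\R_2\rbr\to 0$ together with Lemma \ref{ExtFun}(1) and the Gorenstein $\pd/\id$ equivalence for the only nontrivial ``crossing'' step, and both invoke $\Gamma^{\mathrm{op}}$ symmetry for the mirror implications. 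The only difference is presentational — the paper runs $(1)\Rightarrow(2)\Rightarrow(3)\Rightarrow(4)$ and then appeals to $\Gamma^{\mathrm{op}}$ for $(4)\Rightarrow(5)\Rightarrow(6)\Rightarrow(1)$, whereas you state the automatic facts upfront and traverse the same cycle starting at $(3)$.
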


\begin{proof}
We observe that $R_1$ and $R_2$ are two-sided noetherian rings,
since they are isomorphic to certain quotient rings of $\Gamma$.

``(1)$\Rightarrow$ (2)" We observe that $\id_\Gamma \lbr R_1 \\
0 \rbr <\infty$, since $\lbr R_1
\\  0 \rbr$ is a projective left $\Gamma$-module and $\Gamma$ is Gorenstein. Lemma~\ref{ExtFun}(2) implies that $\id {_{R_1}}R_1 <\infty$. Hence
$R_1$ is Gorenstein.

``(2)$\Rightarrow$ (3)" We observe that $\id_\Gamma \lbr M_{12}\\ R_2\\
\rbr < \infty$, since $\lbr M_{12}\\
R_2\\ \rbr$ is a projective left $\Gamma$-module and $\Gamma$ is
Gorenstein. Lemma~\ref{ExtFun}(2) implies that $\id {_{R_1}}M_{12} <
\infty$. Since $R_1$ is Gorenstein, we have $\pd {_{R_1}}M_{12} <
\infty$ .

``(3)$\Rightarrow$ (4)" By Lemma~\ref{ExtFun}(1), $\pd_\Gamma\lbr
M_{12}\\0\rbr = \pd{_{R_1}}M_{12}< \infty$. Since $\Gamma$ is
Gorenstein, we have $\id_\Gamma\lbr M_{12}\\0\rbr< \infty$. Recall
from above that $\id_\Gamma \lbr M_{12}\\ R_2\\
\rbr < \infty$.
 The following exact sequence of $\Gamma$-modules
\[0\rightarrow \lbr M_{12} \\ 0 \\ \rbr
   \rightarrow \lbr M_{12} \\ R_2 \\ \rbr
   \rightarrow \lbr 0 \\ R_2 \\ \rbr \rightarrow 0\]
   implies $\id_\Gamma \lbr 0\\ R_2\rbr <\infty$. By Lemma~\ref{ExtFun}(1)
   $\id{_{R_2}}R_2<\infty$. Then we are done.

We observe that the opposite ring $\Gamma^{\rm op}$ of $\Gamma$ is
identified with the upper triangular matrix ring $\left(
              \begin{array}{cc}
                R_2^{\rm op} & M_{12} \\
                  0 &    R_1^{\rm op} \\
              \end{array}
            \right).$ Then
``(4)$\Rightarrow$ (5)" is similar to ``(1)$\Rightarrow$ (2)";
``(5)$\Rightarrow$ (6)" is similar to ``(2)$\Rightarrow$ (3)";
``(6)$\Rightarrow$ (1)" is similar to ``(3)$\Rightarrow$ (4)".
\end{proof}

\begin{lem}{\rm(\cite[ Thorem 3.3]{XChen})}\label{XChen}
Let $\Gamma=\left(
              \begin{array}{cc}
                R_1 & M_{12} \\
                 0  &    R_2 \\
              \end{array}
            \right)$ be an upper triangular matrix ring.
Assume that $R_1$ and $R_2$ are Gorenstein. Then $\Gamma$ is
Gorenstein if and only if $M_{12}$ is finitely generated and has
finite projective dimension on both sides.
\end{lem}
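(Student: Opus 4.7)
The plan is to prove both directions using the explicit descriptions of projective and injective $\Gamma$-modules from Proposition~\ref{ProjInjOverTria} (as images of the functors $i_t$ and $j_t$) together with the dimension inequalities of Lemma~\ref{ExtFun}.

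For the ``only if'' direction, assume $\Gamma$ is Gorenstein. First observe that $R_1$ and $R_2$ are quotient rings of $\Gamma$ (by the ideals that collapse $M_{12}$), hence two-sided noetherian. Next, the module $\Gamma e_2 = i_2(R_2) = \lbr M_{12}\\ R_2\rbr$ is projective, hence of finite injective dimension over the Gorenstein ring $\Gamma$, so Lemma~\ref{ExtFun}(2) gives $\id_{R_1} M_{12} \leq \id_\Gamma \Gamma e_2 < \infty$; since $R_1$ is Gorenstein this is equivalent to $\pd_{R_1} M_{12} < \infty$. The symmetric argument, applied to $\Gamma^{\rm op}$ (itself an upper triangular matrix ring), yields $\pd (M_{12})_{R_2} < \infty$. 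To obtain finite generation, I would identify the left $R_1$-module $M_{12}$ with the $\Gamma$-submodule $i_1(M_{12}) \subset \Gamma e_2$; left noetherianness of $\Gamma$ forces $i_1(M_{12})$ to be finitely generated as a $\Gamma$-module, and since its $\Gamma$-action factors through the projection $\Gamma \twoheadrightarrow R_1$, it is finitely generated as an $R_1$-module. A symmetric argument handles the right side.

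For the ``if'' direction, two-sided noetherianness of $\Gamma$ is routine: using the decomposition $\Gamma = \Gamma e_1 \oplus \Gamma e_2$ and the short exact sequence $0 \to i_1(M_{12}) \to \Gamma e_2 \to j_2(R_2) \to 0$, each subquotient is a noetherian module over some $R_i$, thanks to the finite generation of $M_{12}$ over the noetherian ring $R_1$. The heart of the argument is $\id_\Gamma \Gamma < \infty$, equivalently $\id_\Gamma \Gamma e_i < \infty$ for $i = 1, 2$. My key tools would be the short exact sequences
\[ 0 \to i_1(X_1) \to j_1(X_1) \to j_2(\Hom_{R_1}(M_{12}, X_1)) \to 0, \]
coming from the evaluation pairing with $X_1 \in \{R_1, M_{12}\}$, and $0 \to i_1(M_{12}) \to i_2(R_2) \to j_2(R_2) \to 0$, combined with the easy bound $\id_\Gamma j_t(Y) \leq \id_{R_t} Y$ (which follows because $j_t$ is exact and preserves injectives, by Proposition~\ref{ProjInjOverTria}(2)).

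The main obstacle will be showing that the $R_2$-modules $\Hom_{R_1}(M_{12}, R_1)$ and $\Hom_{R_1}(M_{12}, M_{12})$ have finite injective dimension over $R_2$, which by the Gorensteinness of $R_2$ amounts to bounding their $R_2$-projective dimension. The plan is to take a finitely generated projective resolution $P_\bullet \to M_{12}$ over $R_1$ (available since $R_1$ is noetherian, $M_{12}$ is finitely generated, and $\pd_{R_1} M_{12} < \infty$); the bounded complex $\Hom_{R_1}(P_\bullet, X)$ then has cohomology $\Ext_{R_1}^*(M_{12}, X)$, inheriting a left $R_2$-module structure from the right $R_2$-action on $M_{12}$. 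The hypothesis $\pd (M_{12})_{R_2} < \infty$ is crucial at this step: via a tensor-hom adjunction it transfers to a bound on $\pd_{R_2}$ of the Ext modules in question. Finally, the symmetric argument applied to $\Gamma^{\rm op}$ gives $\id_{\Gamma^{\rm op}} \Gamma < \infty$, and the proof is complete.
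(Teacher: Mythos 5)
The paper does not prove this lemma: it cites it as \cite[Theorem 3.3]{XChen} and uses it as a black box. So there is no in-paper proof to compare against, beyond the observation that the ``only if'' content is essentially reproduced inside the proof of Lemma~\ref{LE}. Your ``only if'' direction is correct and coincides with that argument: projectivity of $\Gamma e_2=\lbr M_{12}\\ R_2\rbr$, Gorensteinness of $\Gamma$, Lemma~\ref{ExtFun}(2), Iwanaga's characterization, the opposite ring, and noetherianness for finite generation.

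Your ``if'' direction, however, has two genuine gaps. First, you invoke the bound $\id_\Gamma j_t(Y)\leq \id_{R_t}Y$ on the grounds that ``$j_t$ is exact.'' For $t=2$ this is fine, but $j_1(X_1)=\lbr X_1\\ \Hom_{R_1}(M_{12},X_1)\rbr$ is exact precisely when $\Hom_{R_1}(M_{12},-)$ is exact, i.e.\ when $M_{12}$ is \emph{projective} as a left $R_1$-module --- here you only have finite projective dimension, and $j_1$ need not be exact. Since your two displayed short exact sequences use $j_1(R_1)$ and $j_1(M_{12})$, with $R_1$ and $M_{12}$ generally not injective, this bound is exactly what you need and it is unjustified. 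A workable repair is to apply the exact functor $i_1$ to a finite injective coresolution $0\to R_1\to I^0\to\cdots\to I^{d}\to 0$ first, so that each term $i_1(I^j)$ sits in a short exact sequence $0\to i_1(I^j)\to j_1(I^j)\to j_2(\Hom_{R_1}(M_{12},I^j))\to 0$ in which $j_1(I^j)$ \emph{is} injective, and then splice.

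Second, the tensor--hom step is not coherent as stated. You propose taking a projective resolution $P_\bullet\to M_{12}$ over $R_1$ and reading off an $R_2$-structure from $\Hom_{R_1}(P_\bullet,X)$; but the $P_i$ are merely left $R_1$-modules, so the individual terms $\Hom_{R_1}(P_i,X)$ carry no left $R_2$-action (only the cohomology $\Ext^i_{R_1}(M_{12},X)$ does, by functoriality). The correct use of the adjunction is different: for $I$ injective over $R_1$ and $N$ a left $R_2$-module, $\Ext^i_{R_2}\bigl(N,\Hom_{R_1}(M_{12},I)\bigr)\cong\Hom_{R_1}\bigl(\Tor^{R_2}_i(M_{12},N),I\bigr)$, which vanishes for $i>\pd\,(M_{12})_{R_2}$; this gives $\id_{R_2}\Hom_{R_1}(M_{12},I)\leq \pd\,(M_{12})_{R_2}$ \emph{for injective} $I$, and together with the repair above yields a bound on $\id_\Gamma i_1(R_1)$ and $\id_\Gamma i_1(M_{12})$. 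As written, though, your argument both relies on a false exactness claim for $j_1$ and sketches the key $\pd_{R_2}$ bound in a way that does not parse, so the ``if'' direction is not a proof.
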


 Let $n\geq 2$, and let $\Gamma=\left(
          \begin{array}{cccc}
            R_1 & M_{12} & \cdots & M_{1n} \\
             & R_2 & \cdots & M_{2n} \\
             &  & \ddots & \vdots \\
                &  &  & R_n \\
          \end{array}
        \right)$ be an upper triangular matrix ring of order $n$.

\begin{note}\label{N}
\emph{Set $M_{1:}=\left(\begin{array}{c} M_{12}, \cdots , M_{1n}
\end{array}
            \right)$. We denote by $\Gamma_t=\left(
          \begin{array}{cccc}
            R_1 & M_{12} & \cdots & M_{1t} \\
             & R_2 & \cdots & M_{2t} \\
             &  & \ddots & \vdots \\
                &  &  & R_t \\
          \end{array}
        \right)$ the ring given by the $t\times t$ leading principal
        submatrix of $\Gamma$ for $1\leq t\leq n-1$. We denote by $\Gamma'_{n-1}$ the ring given by the $(n-1)\times (n-1)$ principal submatrix of $\Gamma$ which
        leaves out the first row and the first column. Denote the natural left
        $\Gamma_t$-module $\lbr M_{1,t+1}\\
\vdots \\ M_{t,t+1}\\ \rbr$ by $M_t^*$.}
\end{note}

The following result extends Lemma ~\ref{XChen}.

\begin{prop}\label{XChen1}
Let $\Gamma$ be an upper triangular matrix ring of order $n$ as
above. Assume that all $R_i$ are Gorenstein. Then $\Gamma$ is
Gorenstein if and only if all bimodules $M_{ij}$ are finitely
generated and have finite projective dimension on both sides.
\end{prop}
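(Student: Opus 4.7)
The plan is to prove Proposition~\ref{XChen1} by induction on the order $n$ of $\Gamma$, with the base case $n=2$ being precisely Lemma~\ref{XChen}. For the inductive step ($n\geq 3$) I would view $\Gamma$ as the $2\times 2$ block upper triangular matrix $\left(\begin{smallmatrix} \Gamma_{n-1} & M^*_{n-1} \\ 0 & R_n \end{smallmatrix}\right)$ and apply Lemma~\ref{XChen} once more. This reduces the Gorensteinness of $\Gamma$ (given Gorensteinness of $\Gamma_{n-1}$, supplied by the induction hypothesis in the if-direction and by Lemma~\ref{LE} together with $R_n$ Gorenstein in the only-if direction) to the single condition that $M^*_{n-1}$ is finitely generated and has finite projective dimension on both sides as a $\Gamma_{n-1}$-$R_n$-bimodule. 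The entire content of the inductive step is then to match this block-level condition on $M^*_{n-1}$ with the corresponding componentwise condition on the individual $M_{i,n}$.

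On the right the translation is essentially tautological, since $M^*_{n-1}=\bigoplus_{i=1}^{n-1} M_{i,n}$ as a right $R_n$-module, so both finite generation and finite projective dimension pass between the sum and its summands. On the left, finite generation is a routine generators argument using that each $M_{ij}$ with $i,j\leq n-1$ is finitely generated (from the induction hypothesis). The substantive problem is the interchange between ``$\pd_{\Gamma_{n-1}} M^*_{n-1} < \infty$'' and ``$\pd_{R_i} M_{i,n} < \infty$ for every $i$''. I would isolate this into two transfer statements for a general left $\Gamma_{n-1}$-module $X=(X_i)$, under the standing hypothesis that each $M_{jk}$ with $j<k\leq n-1$ has finite projective dimension on both sides: (A) $\pd_{\Gamma_{n-1}} X < \infty$ forces $\pd_{R_i} X_i < \infty$ for every $i$; (B) conversely, if each $\pd_{R_i} X_i < \infty$, then $\pd_{\Gamma_{n-1}} X < \infty$.

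Statement (A) follows quickly from Proposition~\ref{ProjInjOverTria}(1): the $i$-th components of a bounded projective $\Gamma_{n-1}$-resolution of $X$ are direct sums of modules of the form $M_{i,t}\otimes_{R_t} P_t$ with $P_t$ projective over $R_t$, and each such tensor product is a direct summand of $M_{i,t}^{(I)}$, hence inherits finite $R_i$-projective dimension from $M_{i,t}$. Statement (B) is the main obstacle, and I would prove it by nested induction, outer on the order $m$ of the ambient triangular ring and inner on $\pd_{R_m} X_m$. Using the short exact sequence $0 \to \lbr X' \\ 0\rbr \to X \to j_m(X_m) \to 0$, together with Lemma~\ref{ExtFun}(1) and the outer induction on the left-hand term, the problem is reduced to bounding $\pd_{\Gamma_m} j_m(X_m)$ alone. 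The decisive observation is that $j_m$ is an exact functor, since $j_m(Y)$ is simply $Y$ pulled back along the ring quotient $\Gamma_m\twoheadrightarrow R_m$ by the two-sided ideal $\left(\begin{smallmatrix} \Gamma_{m-1} & M^*_{m-1} \\ 0 & 0 \end{smallmatrix}\right)$; this exactness drives the inner induction on $\pd_{R_m} X_m$. The inner base case $\pd_{R_m} X_m = 0$ is handled by the short exact sequence $0 \to \lbr M^*_{m-1}\otimes X_m\\ 0\rbr \to i_m(X_m) \to j_m(X_m) \to 0$: its middle term is projective over $\Gamma_m$ by Proposition~\ref{ProjInjOverTria}(1), and its left-hand term has finite projective dimension by the outer induction applied to $\Gamma_{m-1}$ (its components $M_{i,m}\otimes X_m$ being summands of $M_{i,m}^{(I)}$, and so of finite $R_i$-pd).

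With (A) and (B) in hand the inductive step closes cleanly. In the only-if direction, Lemma~\ref{LE} applied to the block decomposition (with $R_n$ Gorenstein) yields both Gorensteinness of $\Gamma_{n-1}$ and finite projective dimension of $M^*_{n-1}$ on both sides; the induction hypothesis handles all $M_{ij}$ with $j\leq n-1$, and (A) extracts finite $\pd_{R_i} M_{i,n}$. In the if direction, the induction hypothesis renders $\Gamma_{n-1}$ Gorenstein, (B) supplies $\pd_{\Gamma_{n-1}} M^*_{n-1} < \infty$, and Lemma~\ref{XChen} applied to the block decomposition concludes Gorensteinness of $\Gamma$. The sole nontrivial point, and the place where naive attempts to transfer projective resolutions via $i_m$ fail (because $i_m$ is exact only under flatness, which is strictly stronger than finite pd), is the nested induction for (B); exploiting the exactness of $j_m$ via the quotient $\Gamma_m\twoheadrightarrow R_m$ is what allows us to bypass that flatness obstruction.
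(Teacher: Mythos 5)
Your proposal is correct and follows the same top-level induction on $n$ via the block decomposition $\Gamma=\left(\begin{smallmatrix} \Gamma_{n-1} & M^*_{n-1} \\ 0 & R_n \end{smallmatrix}\right)$, but it deviates from the paper at both technical junctures. For the only-if direction the paper uses \emph{two} block decompositions at once, $\left(\begin{smallmatrix} \Gamma_{n-1} & M^*_{n-1} \\ 0 & R_n \end{smallmatrix}\right)$ and $\left(\begin{smallmatrix} R_1 & M_{1:} \\ 0 & \Gamma'_{n-1} \end{smallmatrix}\right)$; induction on $\Gamma_{n-1}$ and $\Gamma'_{n-1}$ handles every $M_{ij}$ except $M_{1n}$, which is then extracted as a direct summand of $M^*_{n-1}$ on the right and of $M_{1:}$ on the left. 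Your statement (A), reading off the $i$-th component of a bounded $\Gamma_{n-1}$-projective resolution via Proposition~\ref{ProjInjOverTria}(1), replaces the second decomposition entirely and is a legitimate alternative. For the if direction the contrast is sharper: the paper's transfer statement (Lemma~\ref{XChen2}) is proved by leaning on the Gorensteinness of $\Gamma$ already established at each inductive stage, namely $\pd_{R_n}X_n<\infty$ and $R_n$ Gorenstein give $\id_{R_n}X_n<\infty$, Lemma~\ref{ExtFun}(1) lifts this to $\id_\Gamma j_n(X_n)<\infty$, and Iwanaga's theorem over the Gorenstein ring $\Gamma$ converts back to finite projective dimension. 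Your (B) avoids Gorensteinness entirely: the nested induction driven by exactness of the restriction functor $j_m$ (pullback along $\Gamma_m\twoheadrightarrow R_m$), together with the sequence $0\to\left(\begin{smallmatrix} M^*_{m-1}\otimes X_m\\ 0\end{smallmatrix}\right)\to i_m(X_m)\to j_m(X_m)\to 0$ for the projective base case, is a purely homological argument that is valid for any triangular matrix ring whose bimodules have finite projective dimension on both sides, and your observation that $i_m$ cannot be used directly because tensoring with the $m$-th column is not exact without flatness is exactly the right diagnosis of the obstruction. In short, the paper's route is shorter because it borrows the Gorenstein machinery it is simultaneously building, while yours isolates a transfer lemma that is more general and self-contained, at the cost of an extra layer of induction.
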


\begin{proof}[Proof of the ``only if" part.] Assume that $\Gamma$ is Gorenstein. We use induction on $n$.
The case $n=2$ is due to Lemma ~\ref{XChen}. Assume that $n>2$.
Write $\Gamma=\left(
              \begin{array}{cc}
                \Gamma_{n-1}  & M^*_{n-1} \\
                         0     &    R_n \\
              \end{array}
            \right)=\left(
              \begin{array}{cc}
                R_1  & M_{1:} \\
                   0  &  \Gamma'_{n-1}  \\
              \end{array}
            \right)$. Then $\Gamma_{n-1}$ and $\Gamma'_{n-1}$ are
            Gorenstein by Lemma~\ref{LE}, and $M^*_{n-1}$ and $M_{1:}$ are finitely generated and have finite
projective dimension on both sides by Lemma~\ref{XChen}. By
induction, all $M_{ij}$ possibly except for $M_{1n}$ are finitely
generated and have finite projective dimension on both sides. Since
$M_{1n}$ is a direct summand of $M^*_{n-1}$ as a right $R_n$-module,
it is finitely generated as a right $R_n$-module, and $\pd
(M_{1n})_{R_n} < \infty$. Since $M_{1n}$ is a direct summand of
$M_{1:}$ as a left $R_1$-module, it is finitely generated as a left
$R_1$-module, and $\pd_{R_1} M_{1n} < \infty$.
\end{proof}

We prove the ``if" part of Proposition~\ref{XChen1} together with
the following lemma.
\begin{lem}\label{XChen2}
Let $\Gamma$ be an upper triangular matrix ring of order $n$. Assume
that all bimodules $M_{ij}$ are finitely generated and have finite
projective dimension on both sides.
\begin{enumerate}
\item Let $X=\lbr X_1\\ \vdots \\ X_n\\
\rbr$ be a left $\Gamma$-module. If each $X_i$ is finitely generated
as a left $R_i$-module satisfying $\pd{_{R_i}X_i}<\infty$, then $X$
is finitely generated as a left $\Gamma$-module satisfying
$\pd_\Gamma X<\infty$.

 \item Let $Y=\left(\begin{array}{c} Y_1, \cdots , Y_n
\end{array} \right)$ be a right $\Gamma$-module. If each $Y_i$ is finitely generated as a right
$R_i$-module satisfying $\pd(Y_i)_{R_i}<\infty$, then $Y$ is
finitely generated as a right $\Gamma$-module satisfying $\pd
Y_\Gamma<\infty$.

\end{enumerate}
\end{lem}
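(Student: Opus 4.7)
The plan is to prove (1) by induction on $n$ combined with a secondary induction on the projective dimension of the bottom component, and then to deduce (2) from (1) by passing to $\Gamma^{\rm op}$, which, as noted in the proof of Lemma~\ref{LE}, is again an upper triangular matrix ring of the same kind. Finite generation of $X$ over $\Gamma$ is immediate from finite generation of each $X_i$ over $R_i$, so the substance of the argument is the bound on projective dimension.

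For the outer induction, the case $n=1$ is trivial. For $n\geq 2$, decompose $\Gamma = \lbrt \Gamma_{n-1} & M^*_{n-1}\\ 0 & R_n \rbr$ and write $X = \lbr X'\\ X_n \rbr$ with $X' = \lbr X_1 \\ \vdots \\ X_{n-1}\rbr$. The outer inductive hypothesis applied to $\Gamma_{n-1}$ gives $\pd_{\Gamma_{n-1}} X' < \infty$, and applied to $M^*_{n-1}$ (whose $i$-th component $M_{in}$ satisfies $\pd_{R_i}M_{in}<\infty$) gives $\pd_{\Gamma_{n-1}} M^*_{n-1} < \infty$; the right $R_n$-projective dimension of $M^*_{n-1}=\bigoplus_i M_{in}$ is finite by hypothesis. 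Hence everything reduces to the $n=2$ version of the lemma applied to the pair $(\Gamma_{n-1}, R_n)$ with bimodule $M^*_{n-1}$.

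For the $n=2$ case with $\Gamma=\lbrt R_1 & M \\ 0 & R_2 \rbr$, I would induct on $d:=\pd_{R_2}X_2$. When $d=0$, I chain the two short exact sequences
\[
0 \to i_1(X_1) \to X \to j_2(X_2) \to 0, \qquad 0 \to i_1(M\otimes_{R_2} X_2) \to i_2(X_2) \to j_2(X_2) \to 0.
\]
By Lemma~\ref{ExtFun}(1), $\pd_\Gamma i_1(X_1)=\pd_{R_1} X_1<\infty$; by Proposition~\ref{ProjInjOverTria}(1), $i_2(X_2)$ is $\Gamma$-projective since $X_2$ is $R_2$-projective; and $M\otimes_{R_2} X_2$ is a direct summand of a finite direct sum of copies of $M$, giving $\pd_{R_1}(M\otimes_{R_2} X_2)\leq \pd_{R_1} M<\infty$. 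These force $\pd_\Gamma X<\infty$. For $d>0$, choose a short exact sequence $0\to K\to P\to X_2\to 0$ of $R_2$-modules with $P$ finitely generated projective and $\pd_{R_2} K=d-1$ (using that $R_2$ is noetherian in the intended applications), and form $\tilde X=\lbr X_1\\ P\rbr$ with structure map $M\otimes_{R_2} P\to X_1$ defined by composition with $P\twoheadrightarrow X_2$. The resulting short exact sequence
\[
0 \to j_2(K) \to \tilde X \to X \to 0
\]
of $\Gamma$-modules reduces $X$ to $\tilde X$ (base case, bottom projective) and to $j_2(K)$ (inner IH, bottom of $R_2$-projective dimension $d-1$), completing the step.

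The main obstacle is that the functor $i_2$ is not exact, so one cannot build a $\Gamma$-projective resolution of $i_2(X_2)$ simply by tensoring an $R_2$-projective resolution of $X_2$ with the column $C_2$. The auxiliary module $\tilde X$ sidesteps this by realizing one step of the $R_2$-resolution of $X_2$ as a $\Gamma$-module short exact sequence, converting the failure of exactness into the inductive step.
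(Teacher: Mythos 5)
Your proof is correct, but it takes a genuinely different route from the paper's. The paper proves this lemma simultaneously with the ``if'' part of Proposition~\ref{XChen1} in a single induction on $n$: for the base case $n=2$ it first invokes Lemma~\ref{XChen} to see that $\Gamma$ is Gorenstein, then converts $\pd_{R_2} X_2 < \infty$ into $\id_{R_2} X_2 < \infty$ (using that $R_2$ is Gorenstein), identifies this with $\id_\Gamma j_2(X_2)$ via Lemma~\ref{ExtFun}(1), and finally converts back to $\pd_\Gamma j_2(X_2) < \infty$ using the Gorensteinness of $\Gamma$. So the paper's bound on $\pd_\Gamma j_2(X_2)$ rests entirely on the projective-injective dimension equivalence over a Gorenstein ring. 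Your argument avoids Gorensteinness altogether: the secondary induction on $\pd_{R_2} X_2$, together with the auxiliary module $\tilde X$ and the sequence $0 \to j_2(K) \to \tilde X \to X \to 0$, replaces the Gorenstein conversion with an elementary dimension-shifting step, and the two exact sequences in your base case give the bound $\pd_\Gamma j_2(X_2)\leq \pd_{R_1}M+1$ directly. This buys a cleaner logical structure --- Lemma~\ref{XChen2} is proved on its own, rather than being interleaved with Proposition~\ref{XChen1} --- and it makes transparent exactly which hypotheses are used where (you need only noetherianity of $R_2$ to keep syzygies finitely generated, which holds in all intended applications since Gorenstein rings are two-sided noetherian). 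The cost is a somewhat longer $n=2$ case. Both approaches handle $n>2$ by the same reduction to the $2\times 2$ block form $\lbrt \Gamma_{n-1}& M^*_{n-1}\\ 0 & R_n\rbr$, applying the outer inductive hypothesis to $X'$ and to $M^*_{n-1}$.
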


\begin{proof}[ Proof of the ``if" part of Proposition {\rm ~\ref{XChen1}} and
Lemma {\rm~\ref{XChen2}}.] We use induction on $n$.

If $n=2$, Lemma
~\ref{XChen} implies that $\Gamma$ is Gorenstein. Let $X=\lbr X_1 \\ X_2 \\
\rbr$ be a left $\Gamma$-module. The following exact sequence of
$\Gamma$-modules
\[0\rightarrow \lbr X_1 \\ 0\\ \rbr\rightarrow X \rightarrow \lbr 0\\ X_2\\ \rbr \rightarrow 0\]
implies that $X$ is finitely generated, since $\lbr X_1 \\ 0 \\
\rbr$ and $\lbr  0 \\ X_2 \\
\rbr$ are finitely generated. We have $\id_{R_2}X_2 <
\infty$, since $R_2$ is Gorenstein and $\pd_{R_2}X_2 < \infty$. By Lemma~\ref{ExtFun}(1), $\id_\Gamma \lbr 0 \\ X_2 \\
\rbr=\id_{R_2}X_2 < \infty$, hence  $\pd_\Gamma \lbr 0 \\ X_2 \\
\rbr < \infty$. By Lemma~\ref{ExtFun}(1), $\pd_\Gamma \lbr X_1 \\ 0 \\
\rbr=\pd_{R_1}X_1 < \infty$. Then by the above exact sequence, we
have $\pd_\Gamma X < \infty$. The proof of Lemma \ref{XChen2}(2) is
similar.

Assume that $n>2$. Write $\Gamma=\left(\begin{array}{cc}
\Gamma_{n-1} & M^*_{n-1}\\
  0 &   R_n\\
\end{array}\right)
=\left(\begin{array}{cc}
R_1 & M_{1:}\\
  0 &   \Gamma'_{n-1}\\
\end{array}\right).$ Let $X$ be a left $\Gamma$-module and $Y$ be a right
$\Gamma$-module. Write $X=\lbr X' \\ X_n\\ \rbr$ and $ Y=(Y_1,Y')$,
 where $X'=\lbr X_1\\ \vdots \\ X_{n-1}\\
\rbr$ and $Y'=\left(\begin{array}{c} Y_2, \cdots , Y_n \end{array}
\right)$. By induction, $\Gamma_{n-1}$ and $\Gamma'_{n-1}$ are
Gorenstein, $X'$ is finitely generated as a left
$\Gamma_{n-1}$-module satisfying $\pd_{\Gamma_{n-1}}X' < \infty$,
and $Y'$ is finitely generated as a right $\Gamma'_{n-1}$-module
satisfying $\pd (Y')_{\Gamma'_{n-1}} < \infty$. We consider the left
$\Gamma_{n-1}$-module $M^*_{n-1}$. By induction in Lemma
\ref{XChen2}(1), $M^*_{n-1}$ is finitely generated as a left
$\Gamma_{n-1}$-module satisfying $\pd_{\Gamma_{n-1}}M^*_{n-1} <
\infty$. We observe that $M^*_{n-1}$ is finitely generated as a
right $R_n$-module satisfying $\pd(M^*_{n-1})_{R_n} < \infty$. By
Lemma~\ref{XChen}, $\Gamma$ is Gorenstein. This proves the ``if"
part of Proposition ~\ref{XChen1} in the general case.

For Lemma~\ref{XChen2}(1), write $X=\lbr X' \\ X_n\\ \rbr$, where by
induction $X'$ is finitely generated as a left $\Gamma_{n-1}$-module
satisfying $\pd_{\Gamma_{n-1}}X' < \infty$, and $X_n$ is finitely
generated as a left $R_n$-module satisfying $\pd{_{R_n}X_n}<\infty$.
Then $X$ is finitely generated as a left $\Gamma$-module satisfying
$\pd_{\Gamma}X < \infty$. The proof of Lemma \ref{XChen2}(2) in this
general case is similar.
\end{proof}

We give a characterization of left $\Gamma$-modules with finite
projective dimension; compare \cite[Proposition 2.8(1)]{EC}.
\begin{cor}\label{ProjFiniteMatModOverGore}
Let $\Gamma$ be a Gorenstein upper triangular matrix ring of order
$n$ with each $R_i$ Gorenstein. Let $X=\lbr X_1\\ \vdots \\ X_n\\
\rbr$ be a finitely generated left $\Gamma$-module. Then $\pd_\Gamma
X<\infty$ if and only if $\pd_{R_i} X_i<\infty$ for each $1\leq
i\leq n$.
\end{cor}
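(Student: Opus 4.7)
The plan is to induct on $n$ using the two-block decomposition
$\Gamma = \left(\begin{smallmatrix} \Gamma_{n-1} & M^*_{n-1} \\ 0 & R_n \end{smallmatrix}\right)$
and to toggle between projective and injective dimension via Gorensteinness. The base case $n=1$ is trivial.

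For the ``if'' direction I would simply invoke Lemma \ref{XChen2}(1). By Proposition \ref{XChen1}, each bimodule $M_{ij}$ is finitely generated and has finite projective dimension on both sides, so the hypothesis of that lemma requires only that each $X_i$ be finitely generated over $R_i$. This is automatic: a surjection $\Gamma^k \twoheadrightarrow X$ of left $\Gamma$-modules, restricted via the idempotent $e_i \in \Gamma^D$ to the $i$-th coordinate, gives a surjection of left $R_i$-modules $(e_i\Gamma)^k \twoheadrightarrow X_i$, and $e_i\Gamma \cong R_i \oplus \bigoplus_{l>i} M_{il}$ is finitely generated as a left $R_i$-module by Proposition \ref{XChen1}.

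For the ``only if'' direction, write $X = \left(\begin{smallmatrix} X' \\ X_n \end{smallmatrix}\right)$ with $X' = (X_1,\ldots,X_{n-1})^T$ a left $\Gamma_{n-1}$-module. First I would check the inductive preconditions: $\Gamma_{n-1}$ is Gorenstein by Proposition \ref{XChen1} (its data is inherited from $\Gamma$), and $X'$ is finitely generated over $\Gamma_{n-1}$. The latter follows because the argument of the previous paragraph gives $X_i$ finitely generated over $R_i$ for each $i \leq n-1$, and then the ``diagonally placed'' generators $(0,\ldots,0,y_i^a,0,\ldots,0) \in X'$ span $X'$ under the $\Gamma_{n-1}$-action. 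Since $\Gamma$ is Gorenstein and $\pd_\Gamma X < \infty$, we have $\id_\Gamma X < \infty$ by \cite[Theorem 9]{YIw}. Applying Lemma \ref{ExtFun}(2) to the two-block presentation yields
\[
\pd_{R_n} X_n \leq \pd_\Gamma X < \infty
\qquad\text{and}\qquad
\id_{\Gamma_{n-1}} X' \leq \id_\Gamma X < \infty.
\]
The Gorensteinness of $\Gamma_{n-1}$ converts the second bound into $\pd_{\Gamma_{n-1}} X' < \infty$, and the inductive hypothesis applied to $(\Gamma_{n-1}, X')$ delivers $\pd_{R_i} X_i < \infty$ for $1 \leq i \leq n-1$.

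The main obstacle is purely bookkeeping: one must verify that the auxiliary module $X'$ inherits the finite-generation hypothesis so that the induction can proceed, and must invoke the $\id$--$\pd$ interchange of \cite[Theorem 9]{YIw} at the right two places. Everything else is a clean combination of Lemma \ref{ExtFun}(2), Lemma \ref{XChen2}(1), and Proposition \ref{XChen1}.
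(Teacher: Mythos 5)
Your proof is correct and follows the paper's approach: the paper proves the ``only if'' direction explicitly for $n=2$ via Lemma~\ref{ExtFun}(2) and the $\pd$--$\id$ interchange from Iwanaga's theorem, and then simply states that ``the general case is proved by induction''; your induction on the two-block decomposition $\Gamma=\left(\begin{smallmatrix}\Gamma_{n-1}&M^*_{n-1}\\0&R_n\end{smallmatrix}\right)$ is exactly that argument. The additional bookkeeping you supply---verifying that each $X_i$ (and hence $X'$) is finitely generated, and citing Proposition~\ref{XChen1} (or one could use Lemma~\ref{LE}) for the Gorensteinness of $\Gamma_{n-1}$---is correct and fills in details the paper leaves implicit.
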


\begin{proof}
The ``if" part is due to Lemma~\ref{XChen2}(1). For the ``only if"
part, we only prove the case $n=2$. The general case is proved by
induction. Let $X=\lbr X_1\\ X_2\\ \rbr$ be a left $\Gamma$-module.
By Lemma~\ref{ExtFun}(2), $\pd_{R_2} X_2\leq \pd_\Gamma X< \infty$
and $\id_{R_1} X_1\leq \id_\Gamma X<\infty$. Then $\pd_{R_1}
X_1<\infty$ since $R_1$ is Gorenstein.
\end{proof}

The following results estimate the selfinjective dimension of an
upper triangular matrix ring; compare ~\cite[Remark 3.5]{XChen}.

\begin{prop}\label{GD}
 Let $\Gamma=\left(\begin{array}{cc}
R_1 & M_{12}\\
  0 &   R_2\\
\end{array}\right)$ be an upper triangular matrix ring with $R_1$ and $R_2$
Gorenstein. Let $m, d_1, d_2 \geq 0$.
\begin{enumerate}
\item If $\Gamma$ is $m$-Gorenstein, then both $R_1$ and $R_2$ are
$m$-Gorenstein. Moreover, $\pd_{R_1}M_{12}\leq m-1$ if $m\geq 1$.

\item Assume that $R_i$ is $d_i$-Gorenstein for $i=1,2$. If $M_{12}$
is finitely generated and projective on both sides, then $\Gamma$ is
$d$-Gorenstein, where $d=\max\{d_1,d_2\}$ if $d_1\neq d_2$, and
$d=d_1+1$ if $d_1=d_2$.

\end{enumerate}
\end{prop}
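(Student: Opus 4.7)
The plan is to decompose $\Gamma$ as a left $\Gamma$-module into its two projective column summands $C_1 = \lbr R_1 \\ 0 \rbr$ and $C_2 = \lbr M_{12} \\ R_2 \rbr$, so that $\id_\Gamma \Gamma = \max\{\id_\Gamma C_1, \id_\Gamma C_2\}$, and to exploit the short exact sequence
\[
0 \to \lbr M_{12} \\ 0 \rbr \to C_2 \to \lbr 0 \\ R_2 \rbr \to 0
\]
linking the two columns, together with Lemma~\ref{ExtFun} to convert $\Gamma$-dimensions of the corner modules into $R_1$-, $R_2$-dimensions.

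For (1), assume $\id_\Gamma \Gamma \leq m$. Lemma~\ref{ExtFun}(2) applied to the projective $C_1$ gives $\id_{R_1} R_1 \leq m$. By Lemma~\ref{LE}, $R_1$ is Gorenstein and $\pd_{R_1} M_{12}$ is finite, hence $\leq m$ since $R_1$ is $m$-Gorenstein; Lemma~\ref{ExtFun}(1) translates this to $\pd_\Gamma \lbr M_{12} \\ 0 \rbr \leq m$. Using the SES and projectivity of $C_2$, $\pd_\Gamma \lbr 0 \\ R_2 \rbr$ is finite, hence $\leq m$ by $m$-Gorensteinness of $\Gamma$, which in turn forces $\id_\Gamma \lbr 0 \\ R_2 \rbr \leq m$; Lemma~\ref{ExtFun}(1) gives $\id_{R_2} R_2 \leq m$. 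The right-sided bounds follow by running the argument on $\Gamma^{\rm op}$. For $\pd_{R_1} M_{12} \leq m-1$: the SES with $C_2$ projective exhibits $\lbr M_{12} \\ 0 \rbr$ as a first syzygy of $\lbr 0 \\ R_2 \rbr$, so $\pd_\Gamma \lbr M_{12} \\ 0 \rbr = \pd_\Gamma \lbr 0 \\ R_2 \rbr - 1 \leq m - 1$ (the degenerate case $\lbr 0 \\ R_2 \rbr$ projective forces $M_{12} = 0$).

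For (2), $\Gamma$ is Gorenstein by Lemma~\ref{XChen}; since $\id_\Gamma \Gamma = \id \Gamma_\Gamma$ over Gorenstein rings, it suffices to bound $\id_\Gamma \Gamma \leq d$. The key estimate is
\[
\id_\Gamma \lbr X_1 \\ 0 \rbr \leq \max\bigl\{\id_{R_1} X_1,\; 1 + \id_{R_2} \Hom_{R_1}(M_{12}, X_1)\bigr\}
\]
for any left $R_1$-module $X_1$. I derive this from the Ext long exact sequence applied to $0 \to \lbr Y_1 \\ 0 \rbr \to Y \to \lbr 0 \\ Y_2 \rbr \to 0$ (for an arbitrary $\Gamma$-module $Y$) using the identifications $\Ext^k_\Gamma(\lbr Y_1 \\ 0 \rbr, \lbr X_1 \\ 0 \rbr) \cong \Ext^k_{R_1}(Y_1, X_1)$ (lift an $R_1$-projective resolution through the exact embedding) and $\Ext^k_\Gamma(\lbr 0 \\ Y_2 \rbr, \lbr X_1 \\ 0 \rbr) \cong \Ext^{k-1}_{R_2}(Y_2, \Hom_{R_1}(M_{12}, X_1))$ for $k \geq 1$. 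The second identification relies on an $R_2$-projective resolution $P_\bullet \to Y_2$: since $M_{12}$ is flat over $R_2$ and projective over $R_1$, the complex $M_{12} \otimes_{R_2} P_\bullet$ is an $R_1$-projective resolution of $M_{12} \otimes_{R_2} Y_2$, and combining the vanishing $\Hom_\Gamma(\lbr M_{12} \otimes_{R_2} P_k \\ P_k \rbr, \lbr X_1 \\ 0 \rbr) = 0$ with Hom--tensor adjunction reduces the $\Gamma$-Ext to $R_2$-Ext shifted by one.

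To finish (2), I bound $\id_{R_2} \Hom_{R_1}(M_{12}, X_1)$ for $X_1 \in \{R_1, M_{12}\}$: applying $\Hom_{R_1}(M_{12}, -)$ (exact, by projectivity of $M_{12}$ over $R_1$) to an $R_1$-injective resolution of $X_1$ yields an injective $R_2$-resolution (each term $\Hom_{R_1}(M_{12}, E^j)$ is injective over $R_2$ since $M_{12}$ is flat over $R_2$) of length $\leq d_1$. Combined with $d_2$-Gorensteinness of $R_2$, this gives $\id_{R_2} \Hom_{R_1}(M_{12}, X_1) \leq \min(d_1, d_2)$. Substituting into the key estimate and case-analyzing on $(d_1, d_2)$ yields $\id_\Gamma \lbr X_1 \\ 0 \rbr \leq d$ for $X_1 \in \{R_1, M_{12}\}$; the column SES then gives $\id_\Gamma C_2 \leq \max\{d, d_2\} = d$. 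The main obstacle is the second Ext identification above; the remaining pieces are direct applications of Lemma~\ref{ExtFun} or routine case analysis on how $d_1$ compares to $d_2$.
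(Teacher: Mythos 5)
Your proof is correct and follows the same overall strategy as the paper's: part (1) transfers selfinjective-dimension bounds from $\Gamma$ to the corners via Lemma~\ref{ExtFun} and the column short exact sequence, and part (2) reduces to bounding $\id_\Gamma C_1$ and $\id_\Gamma C_2$ using $\Gamma = C_1 \oplus C_2$. Part (1) matches the paper nearly line for line (the paper phrases the degenerate case as $\lbr M_{12} \\ 0 \rbr$ projective rather than $\lbr 0 \\ R_2 \rbr$ projective, but both settle it). Where you diverge is the derivation of the key estimate in (2): the paper obtains $\id_\Gamma \lbr R_1 \\ 0 \rbr \leq \max\{d_1, \min(d_1,d_2)+1\}$ in a single stroke from the short exact sequence $0 \to \lbr R_1 \\ 0 \rbr \to j_1(R_1) \to \lbr 0 \\ \Hom_{R_1}(M_{12},R_1) \rbr \to 0$, using that $j_1$ is exact and preserves injectives (giving $\id_\Gamma j_1(R_1) \leq d_1$) together with Lemma~\ref{ExtFun}(1); you instead reprove the same bound by hand through explicit $\Ext$ transfer isomorphisms. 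That route is valid, but note that the dimension-shift $\Ext^k_\Gamma(j_2(Y_2), i_1(X_1)) \cong \Ext^{k-1}_{R_2}(Y_2, \Hom_{R_1}(M_{12}, X_1))$ does not come just from the ingredients you list: $i_2(P_\bullet)$ resolves $i_2(Y_2)$, not $j_2(Y_2)$, so you also need to invoke the short exact sequence $0 \to i_1(M_{12} \otimes_{R_2} Y_2) \to i_2(Y_2) \to j_2(Y_2) \to 0$, use $\Ext^k_\Gamma(i_2(Y_2), i_1(X_1)) = 0$ (which follows from your Hom-vanishing on the resolution $i_2(P_\bullet)$), and then apply the Hom--tensor adjunction to the remaining term; that sequence is the glue you leave implicit. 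The remaining pieces—$\id_{R_2}\Hom_{R_1}(M_{12},X_1) \leq \min(d_1,d_2)$, the case analysis on $(d_1,d_2)$, and the final assembly—agree with the paper. Net effect: same strategy and same bound, but the paper's use of the adjunction "$j_1$ preserves injectives" packages your $\Ext$ bookkeeping into one observation.
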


\begin{proof}

(1) We have $\id_\Gamma \lbr R_1\\0\rbr\leq m$, since $\Gamma$ is
$m$-Gorenstein and $\lbr R_1\\0\rbr$ is a projective
$\Gamma$-module. By Lemma~\ref{ExtFun}(2), $\id{_{R_1}}R_1\leq
\id_\Gamma \lbr R_1\\0\rbr\leq m$. Then $R_1$ is $m$-Gorenstein.
Recall by Lemma~\ref{LE}, $\pd_{R_1}M_{12}< \infty$. By
Lemma~\ref{ExtFun}(1), $\pd_\Gamma\lbr
M_{12}\\0\rbr=\pd_{R_1}M_{12}< \infty$. Then the following exact
sequence of $\Gamma$-modules
\[0\rightarrow \lbr M_{12}\\0\rbr \rightarrow \lbr M_{12}\\R_2\rbr
\rightarrow \lbr 0\\ R_2\rbr \rightarrow 0\]
 implies $\pd_\Gamma\lbr 0\\
R_2\rbr < \infty$, since $\lbr M_{12}\\R_2\rbr$ is a projective $\Gamma$-module. Then we have $\id_\Gamma \lbr 0\\
R_2\rbr \leq m$ since $\Gamma$ is $m$-Gorenstein. By
Lemma~\ref{ExtFun}(1), $\id_{R_2}R_2=\id_\Gamma\lbr 0\\R_2\rbr \leq
m$. Then $R_2$ is $m$-Gorenstein. Moreover, by the above exact
sequence, $\pd_\Gamma\lbr M_{12}\\0\rbr =0$ or $\pd_\Gamma\lbr
M_{12}\\0\rbr=\pd_\Gamma\lbr 0\\R_2\rbr-1\leq m-1$ for $m\geq 1$.
Hence by Lemma~\ref{ExtFun}(1), $\pd_{R_1}M_{12}\leq m-1$ for $m\geq
1$.

(2) By Lemma~\ref{XChen}, $\Gamma$ is Gorenstein. By the following
exact sequence of left $\Gamma$-modules
\[0\rightarrow \lbr R_1\\ 0 \rbr \rightarrow j_1(R_1)=\lbr R_1\\ \Hom_{R_1}(M_{12},R_1)\rbr
\rightarrow \lbr 0\\\Hom_{R_1}(M_{12},R_1)\rbr \rightarrow 0,\] we
have \[\id_\Gamma \lbr R_1\\ 0 \rbr \leq \max\left\{\id_\Gamma
j_1(R_1),
 \id_\Gamma \lbr 0\\\Hom_{R_1}(M_{12},R_1)\rbr+1\right\}.\]
Observe that for any left $R_1$-module $X$,
$\id{_{R_2}}(\Hom_{R_1}(M_{12},X)) \leq \id{_{R_1}} X$ since
$_{R_1}(M_{12})_{R_2}$ is projective on both sides. Then we have
$\id{_{R_2}}(\Hom_{R_1}(M_{12},R_1)) \leq \id{_{R_1}} R_1 \leq d_1$.
We have $\id{_{R_2}}(\Hom_{R_1}(M_{12},R_1)) \leq d_2$, since $R_2$
is $d_2$-Gorenstein. By Lemma~\ref{ExtFun}(1), $\id_\Gamma \lbr
0\\\Hom_{R_1}(M_{12},R_1)\rbr=\id{_{R_2}}(\Hom_{R_1}(M_{12},R_1))
\leq \min\{d_1,d_2\}$. The exact functor $j_1$ preserves injective
modules. Then we have $\id_\Gamma j_1(R_1)  \leq \id{_{R_1}} R_1\leq
d_1$. Hence $\id_\Gamma \lbr R_1\\ 0\\ \rbr \leq
\max\{d_1,\min\{d_1,d_2\}+1\}\leq d$. We have $\id_\Gamma \lbr
M_{12}\\ 0\\ \rbr \leq d$, since $M_{12}$ is a finitely generated
projective left $R_1$-module.

 By Lemma~\ref{ExtFun}(1), $\id_\Gamma \lbr 0\\ R_2\\
\rbr=\id_{R_2}R_2 \leq d_2\leq d$ since $R_2$ is $d_2$-Gorenstein.
Then the following exact sequence of $\Gamma$-modules
\[0\rightarrow \lbr M_{12}\\0\rbr \rightarrow \lbr M_{12}\\R_2\rbr
\rightarrow \lbr 0\\ R_2\rbr \rightarrow 0\]
 implies $\id_\Gamma\lbr M_{12}\\
R_2\rbr \leq d$. Hence $\id_{\Gamma}\Gamma \leq d$ since
$\Gamma=\lbr R_1\\0\rbr\oplus \lbr M_{12}\\R_2\rbr$ as left
$\Gamma$-modules. Then $\Gamma$ is $d$-Gorenstein .
\end{proof}

The following results estimate the selfinjective dimension of an
upper triangular matrix ring in general case. Recall the
$\Gamma_t$-module $M_t^*$ from Notation~\ref{N}.
\begin{prop}\label{LG1}
Let $\Gamma$ be an upper triangular matrix ring of order $n$ with
each $R_i$ Gorenstein. Let $m\geq 0$ and $d_i\geq 0$ for each $i$.
\begin{enumerate}
\item If $\Gamma$ is $m$-Gorenstein, then each $ R_i$ is
$m$-Gorenstein. Moreover, $\pd_{\Gamma_t}M_t^*\leq m-1$ if $m\geq 1$
for $1\leq t\leq n-1$.

\item Assume that $R_i$ is $d_i$-Gorenstein for $1\leq i \leq n$. If
all bimodules $M_{ij}$ are finitely generated and projective on both
sides, and each $M_t^*$ is a projective left $\Gamma_{t}$-module for
$1\leq t\leq n-1$, then $\Gamma$ is $d$-Gorenstein, where
$d=\max\{d_1,d_2,\cdots,d_n\}+1$.
\end{enumerate}
\end{prop}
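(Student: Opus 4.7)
The plan is to prove both parts by induction on $n$, reducing to the $2\times 2$ case (Proposition~\ref{GD}) via the block decomposition $\Gamma=\left(\begin{array}{cc}\Gamma_{n-1} & M^*_{n-1}\\ 0 & R_n\end{array}\right)$.

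For part (1), the base case $n=2$ is Proposition~\ref{GD}(1) verbatim, since $\Gamma_1=R_1$ and $M_1^*=M_{12}$. For the inductive step, applying Proposition~\ref{GD}(1) to the displayed block decomposition yields that $\Gamma_{n-1}$ and $R_n$ are $m$-Gorenstein and that $\pd_{\Gamma_{n-1}}M^*_{n-1}\leq m-1$ when $m\geq 1$. The inductive hypothesis applied to $\Gamma_{n-1}$ then delivers that each $R_i$ with $1\leq i\leq n-1$ is $m$-Gorenstein and that $\pd_{\Gamma_t}M_t^*\leq m-1$ for $1\leq t\leq n-2$, which completes the argument.

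For part (2), Proposition~\ref{XChen1} first ensures that $\Gamma$ is Gorenstein, so it suffices to bound $\id_\Gamma \Gamma$. The base case $n=2$ is Proposition~\ref{GD}(2), noting that $\max\{d_1,d_2\}+1$ is a (possibly non-sharp) upper bound in either of its cases. For the inductive step I would first verify that $M^*_{n-1}$ is finitely generated and projective on both sides: left projectivity is part of the hypothesis, and finite generation as a left $\Gamma_{n-1}$-module follows from the finite generation of each $M_{i,n}$ as a left $R_i$-module; as a right $R_n$-module, $M^*_{n-1}=\bigoplus_{i=1}^{n-1}M_{i,n}$ is visibly finitely generated projective. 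The inductive hypothesis applied to $\Gamma_{n-1}$ yields that $\Gamma_{n-1}$ is $d'$-Gorenstein with $d'\leq \max\{d_1,\ldots,d_{n-1}\}+1$, and Proposition~\ref{GD}(2) then shows that $\Gamma$ is $d''$-Gorenstein with $d''=\max\{d',d_n\}$ in the case $d'\neq d_n$ and $d''=d'+1$ in the case $d'=d_n$.

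The main technical point is checking that this $d''$ stays within the claimed bound $\max\{d_1,\ldots,d_n\}+1$, so that the $+1$ coming from the inductive hypothesis and the possible $+1$ from Proposition~\ref{GD}(2) do not accumulate. Writing $a=\max\{d_1,\ldots,d_{n-1}\}$, in the case $d'\neq d_n$ one obtains $d''\leq\max\{a+1,d_n\}\leq\max\{a,d_n\}+1=\max\{d_1,\ldots,d_n\}+1$; in the case $d'=d_n$ one has $d''=d_n+1\leq \max\{d_1,\ldots,d_n\}+1$. Structurally, the extra $+1$ from Proposition~\ref{GD}(2) is only triggered when $d_n$ is at least as large as the prior inductive bound, hence already large enough to absorb one more unit within the overall maximum, so no cumulative blow-up occurs.
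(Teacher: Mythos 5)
Your strategy --- induction on $n$ via the block decomposition $\Gamma=\left(\begin{array}{cc}\Gamma_{n-1} & M^*_{n-1}\\ 0 & R_n\end{array}\right)$ and reduction to Proposition~\ref{GD} --- is exactly the paper's, and part (2) is correct as written, including the arithmetic check that the two potential $+1$ contributions do not accumulate and the explicit verification that $M^*_{n-1}$ is finitely generated and projective on both sides.

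There is, however, a gap in the inductive step of part (1). Proposition~\ref{GD} carries the standing hypothesis that both diagonal blocks of the $2\times 2$ decomposition are Gorenstein. In your application this means you must first know that $\Gamma_{n-1}$ is Gorenstein before invoking Proposition~\ref{GD}(1) to conclude that it is $m$-Gorenstein; the hypothesis of Proposition~\ref{LG1} that each $R_i$ is Gorenstein does not by itself give this, and the one-sided estimate $\id_{\Gamma_{n-1}}\Gamma_{n-1}\leq m$ coming from Lemma~\ref{ExtFun}(2) is not enough, since without prior Gorensteinness one cannot invoke Zaks's lemma to control the right-sided injective dimension. The paper supplies the missing step via Lemma~\ref{LE}: $\Gamma$ is Gorenstein (being $m$-Gorenstein) and $R_n$ is Gorenstein, so the equivalences of Lemma~\ref{LE}, applied to the same block decomposition, yield that $\Gamma_{n-1}$ is Gorenstein, and only then does Proposition~\ref{GD}(1) apply. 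You should insert this appeal to Lemma~\ref{LE}, or an equivalent argument, before citing Proposition~\ref{GD}(1).
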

\begin{proof}
(1) We use induction on $n$. The case $n=2$ is due to
Proposition~\ref{GD}(1). Assume that $n> 2$. Write
$\Gamma=\left(\begin{array}{cc}
\Gamma_{n-1} & M^*_{n-1}\\
  0 &   R_n\\
\end{array}\right)$. By Lemma~\ref{LE}, $\Gamma_{n-1}$ is Gorenstein. Then by
Proposition~\ref{GD}(1), we infer that $\Gamma_{n-1}$ and $R_n$ are
$m$-Gorenstein, and $\pd_{\Gamma_{n-1}}M_{n-1}^* \leq m-1$ if $m\geq
1$. By induction, we are done.

(2) We use induction on $n$. The case $n=2$ is due to
Proposition~\ref{GD}(2). Assume that $n> 2$. Write
$\Gamma=\left(\begin{array}{cc}
\Gamma_{n-1} & M^*_{n-1}\\
  0 &   R_n\\
\end{array}\right)$. By induction, $\Gamma_{n-1}$ is $d'$-Gorenstein,
where $d'=\max\{d_1,d_2,\cdots,d_{n-1}\}+1$. Since $M_{n-1}^*$ is a
projective left $\Gamma_{n-1}$-module and a projective right
$R_n$-module, we have that $\Gamma$ is $d''$-Gorenstein by
Proposition~\ref{GD}(2), where $d''=\max\{d',d_n\}$ if $d'\neq d_n$,
and $d''=d'+1$ if $d'=d_n$. In particular, we observe that $d''\leq
\max\{d_1,d_2,\cdots,d_n\}+1=d$.
\end{proof}

\section{Free EI categories}

In this section, we give a new characterization of finite free EI
categories in terms of the corresponding triangular matrix algebras.

Let $k$ be a field. Let $\C$ be a finite category, that is, it has
only finitely many morphisms, and consequently it has only finitely
many objects. Denote by $\Mor\C$ the finite set of all morphisms in
$\C$. The \emph{category algebra} \emph{k}$\C$ of $\C$ is defined as
follows: $\emph{k}\C=\bigoplus\limits_{\alpha \in \Mor\C}k\alpha$ as
a $k$-vector space and the product $*$ is given by the rule
 \[\alpha * \beta=\left\{\begin{array}{ll}
                     \alpha\circ\beta, & \text{ if }\text{$\alpha$ and $\beta$ can be composed in $\C$}; \\
                     0, & \text{otherwise.}
                   \end{array}\right.\]
The unit is given by $1_{k\C}=\sum\limits_{x \in \Obj\C }\Id_x$,
where $\Id_x$ is the identity endomorphism of an object $x$ in $\C$.

Let $\C$ and $\mathscr D$ be finite categories. If they are
equivalent, then \emph{k}$\C$ and \emph{k}$\mathscr D$ are Morita
equivalent; see \cite[Proposition 2.2]{PWebb2}. In particular, $k\C$
is Morita equivalent to $k\C_0$, where $\C_0$ is any skeleton of
$\C$. So we may assume that $\C$ is \emph{skeletal}, that is, for
any two distinct objects $x$ and $y$ in $\C$, $x$ is not isomorphic
to $y$.

The category $\C$ is called a \emph{finite EI category} provided
that all endomorphisms in $\C$ are isomorphisms. In particular,
$\Homc(x,x)=\Ac(x)$ is a finite group for any object $x$ in $\C$.

In what follows, we assume that $\C$ is a finite EI category which
is skeletal.

Let $\C$ have $n$ objects with  $n\geq 2$. We assume that
$\Obj\C=\{x_1,x_2,\cdots,x_n\}$ satisfying
$\Homc(x_i,x_j)=\emptyset$ if $i<j$. Let $M_{ij}=k\Homc(x_j,x_i)$.
Write $R_i=M_{ii}$. We observe that
$R_i=(\Id_{x_i})k\C(\Id_{x_i})=k\Ac(x_i)$ is a group algebra.
Then $M_{ij}$ is naturally an $R_i$-$R_j$-bimodule, and we have a
morphism of $R_i$-$R_j$-bimodules $\psi_{ilj}: M_{il}\otimes_{R_l}
M_{lj} \rightarrow M_{ij}$ which is induced by the composition of
morphisms in $\C$.
\begin{note}\label{NOTE}
\emph{The category algebra $k\C$ is isomorphic to the corresponding
upper triangular matrix algebra $\Gamma_{\C}=\left(
          \begin{array}{cccc}
            R_1 & M_{12} & \cdots & M_{1n} \\
             & R_2 & \cdots & M_{2n} \\
             &  & \ddots & \vdots \\
                &  &  & R_n \\
          \end{array}
        \right)$.
Let $\Gamma_t$ be the algebra given by the $t\times t$ leading
principal
 submatrix of $\Gamma_{\C}$. Denote the left
 $\Gamma_t$-module $\lbr M_{1,t+1}\\
\vdots \\ M_{t,t+1}\\ \rbr$ by $M_t^*$, for $1\leq t\leq n-1$.} 
\end{note}

\begin{defi}\label{DefOfProjCat}
\emph{Let $\C$ be a finite EI category with
$\Obj\C=\{x_1,x_2,\cdots,x_n\}$ satisfying
$\Homc(x_i,x_j)=\emptyset$ if $i<j$. We say that $\C$ is} projective
over $k$ \emph{if each $M_{ij}=k\Homc(x_j,x_i)$ is a projective left
$R_i$-module and a projective right $R_j$-module for $1\leq i<j\leq
n$.}
\end{defi}

Let $G$ be a finite group. We assume that $G$ has a left action on a
finite set $X$. For any $x\in X$, denote its stabilizer by ${\rm
Stab}(x)=\{g\in G \mid g.x=x\}$. The vector space $kX$ is a natural
$kG$-module.

The following result is well known, which can be deduced from
\cite[II.5 Theorem 6]{JA}. We give an elementary argument for
completeness; compare the third paragraph of the proof of
\cite[Theorem 2.5]{PWebb1}.

\begin{lem}\label{GT}
The $kG$-module $kX$ is projective if and only if the order of each
stabilizer ${\rm Stab}(x)$ is invertible in $k$.
\end{lem}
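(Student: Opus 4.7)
First I would reduce to the transitive case. Writing $X=\bigsqcup_i X_i$ as a disjoint union of $G$-orbits gives a $kG$-module decomposition $kX\cong\bigoplus_i kX_i$, so $kX$ is projective if and only if each $kX_i$ is projective. Since stabilizers of points in the same orbit are conjugate in $G$ (hence have the same order), it suffices to treat the transitive case $X=G/H$ with $H=\mathrm{Stab}(x)$ for a chosen basepoint, reducing the claim to: $k[G/H]$ is a projective $kG$-module if and only if $|H|$ is invertible in $k$.

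For the ``if'' direction, assume $|H|$ is invertible in $k$. Set $N_H:=\sum_{h\in H}h\in kG$ and $e:=|H|^{-1}N_H$. A direct check shows $he=eh=e$ for every $h\in H$, so in particular $e^2=e$. The $kG$-linear map $kGe\to k[G/H]$ given by $ge\mapsto[gH]$ is well defined and bijective (using that $g_1e=g_2e$ is equivalent to $g_1^{-1}g_2\in H$, i.e.\ $g_1H=g_2H$), so $k[G/H]\cong kGe$ is a direct summand of the free $kG$-module $kG$, and thus projective.

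For the ``only if'' direction, assume $k[G/H]$ is projective. Then the canonical surjection $\pi\colon kG\twoheadrightarrow k[G/H]$, $g\mapsto[gH]$, admits a $kG$-linear section $s$. Set $\sigma:=s([H])\in kG$; since $h\cdot[H]=[H]$ for all $h\in H$, we get $h\sigma=\sigma$ for all $h\in H$ and $\pi(\sigma)=[H]$. Choose right coset representatives $g_1=1,g_2,\ldots$ for $H\backslash G$; then the left $H$-invariants in $kG$ form the $k$-space with basis $\{N_Hg_i\}$, so $\sigma=\sum_i c_iN_Hg_i$ for some $c_i\in k$. Expanding $\pi(\sigma)=\sum_i c_i\sum_{h\in H}[hg_iH]$ and reading off the coefficient of $[H]$: for $i>1$ one has $g_i\notin H$, so $hg_i\notin H$ and $[hg_iH]\neq[H]$ for every $h\in H$; thus only $i=1$ contributes, giving $c_1|H|=1$ in $k$. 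Hence $|H|$ is invertible.

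\textbf{Main obstacle.} The ``if'' direction is a short verification once the idempotent $|H|^{-1}N_H$ is written down. The slightly more delicate step is the ``only if'' direction: the key point is identifying the left $H$-invariants in $kG$ explicitly via right coset representatives, so that one can extract the numerical constraint $c_1|H|=1$ from the identity $\pi(\sigma)=[H]$, which is precisely what forces invertibility of $|H|$ in $k$.
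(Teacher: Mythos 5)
Your proof is correct and follows essentially the same route as the paper: reduce to the transitive case and establish the Maschke-type criterion that the canonical projection $kG\twoheadrightarrow k(G/H)$ splits if and only if $|H|$ is invertible in $k$. The paper merely states this criterion as an observation, whereas you supply its proof, constructing the idempotent $|H|^{-1}N_H$ for the ``if'' direction and analysing the left $H$-invariants of $kG$ for the ``only if'' direction.
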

\begin{proof}
We may assume that the action on $X$ is transitive. Take $x\in X$,
we have $X\simeq G/{\rm Stab}(x)$. Then we have an isomorphism
$kX\simeq k(G/{\rm Stab}(x))$ of $kG$-modules. We observe the
following Maschke-type result: for any subgroup $H$ of $G$, the
canonical projection $kG\twoheadrightarrow k(G/H)$ of $kG$-modules
splits if and only if the order of $H$ is invertible in $k$. Then
the lemma follows immediately.
\end{proof}

Let $\C$ be a finite EI category and $\alpha \in \Homc(x,y)$. We
call $L_{\alpha}=\{g\in \Ac(y) \mid g\circ\alpha=\alpha\}$ the
\emph{left stabilizer} of $\alpha$, and $R_{\alpha}=\{h\in \Ac(x)
\mid \alpha\circ h=\alpha\}$ the \emph{right stabilizer} of
$\alpha$. Then we have the following immediate consequence of Lemma
~\ref{GT}.

\begin{cor}\label{PI}
Let $\C$ be a finite EI category. Then $\C$ is projective if and
only if for any $\alpha \in \Mor\C$, the orders of $L_{\alpha}$ and
$R_{\alpha}$ are invertible in $k$. \hfill $\square$
\end{cor}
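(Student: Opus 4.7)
The strategy is to rewrite each one-sided projectivity condition in the definition of ``projective over $k$'' as an instance of Lemma~\ref{GT}, applied to the natural group action on the finite set of morphisms. By Notation~\ref{NOTE}, $\C$ is projective iff for all $1\leq i<j\leq n$, the module $M_{ij}=k\Homc(x_j,x_i)$ is projective as a left $R_i=k\Ac(x_i)$-module and as a right $R_j=k\Ac(x_j)$-module. I will analyze each side separately.

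For the left-module side, first I would observe that the finite group $\Ac(x_i)$ acts on the finite set $X=\Homc(x_j,x_i)$ by post-composition $g\cdot\alpha=g\circ\alpha$, and that the associated permutation module $kX$ is precisely $M_{ij}$ with its left $R_i$-action. Unwinding the definition of the left stabilizer, $\{g\in \Ac(x_i)\mid g\circ\alpha=\alpha\}=L_\alpha$, so the stabilizer of $\alpha\in X$ under this action is exactly $L_\alpha$. Lemma~\ref{GT} then says $M_{ij}$ is projective as a left $R_i$-module if and only if $|L_\alpha|$ is invertible in $k$ for every $\alpha\in\Homc(x_j,x_i)$ (decomposing $X$ into $\Ac(x_i)$-orbits and applying the lemma orbit by orbit, noting that stabilizers of points in one orbit are conjugate, hence have the same order).

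The right-module side is dual. The group $\Ac(x_j)$ acts on $\Homc(x_j,x_i)$ on the right by $\alpha\cdot h=\alpha\circ h$; equivalently, the opposite group $\Ac(x_j)^{\rm op}$ acts on the left, and the resulting permutation $kG$-module is $M_{ij}$ with its right $R_j$-structure. The stabilizer of $\alpha$ under this action is exactly $R_\alpha$. Applying Lemma~\ref{GT} (to the opposite group) yields that $M_{ij}$ is projective as a right $R_j$-module if and only if $|R_\alpha|$ is invertible in $k$ for every $\alpha\in\Homc(x_j,x_i)$.

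Combining the two equivalences over all pairs $i<j$ gives the ``if and only if'' of the corollary, provided we also account for the endomorphism morphisms $\alpha\in\Ac(x)$. But for such an $\alpha$ the equations $g\circ\alpha=\alpha$ and $\alpha\circ h=\alpha$ force $g=h=\Id_x$, so $|L_\alpha|=|R_\alpha|=1$ is automatically invertible in $k$. Therefore quantifying over $\alpha\in\Mor\C$ rather than only over non-endomorphism morphisms is harmless, and we obtain the stated equivalence. I do not anticipate a real obstacle here; the only thing to get right is the correct identification of stabilizers with $L_\alpha$ and $R_\alpha$, which is immediate from the definitions.
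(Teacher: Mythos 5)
Your proof is correct and is exactly the unfolding of the argument the paper intends: the paper states the corollary as an ``immediate consequence'' of Lemma~\ref{GT} and gives no proof, and your identification of the left stabilizer with $L_\alpha$ (post-composition action of $\Ac(x_i)$) and the right stabilizer with $R_\alpha$ (pre-composition action of $\Ac(x_j)$, passing to the opposite group) is precisely what is meant. The remark that endomorphisms contribute trivial stabilizers, so quantifying over all of $\Mor\C$ is harmless, is a sensible extra check.
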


Let $\C$ be a finite EI category. Recall from ~\cite[Definition
2.3]{LLi} that a morphism $x\overset{\alpha}{\rightarrow} y$ in $\C$
is \emph{unfactorizable} if $\alpha$ is not an isomorphism and
whenever it has a factorization as a composite $x
\overset{\beta}{\rightarrow} z \overset{\gamma}{\rightarrow} y$,
then either $\beta$ or $\gamma$ is an isomorphism. Let
$x\overset{\alpha}{\rightarrow} y$ in $\C$ be an unfactorizable
morphism. Then $h\circ\alpha\circ g$ is also unfactorizable for
every $h \in \Ac(y)$ and every $g \in \Ac(x)$; see \cite[Proposition
2.5]{LLi}. Let $x\overset{\alpha}{\rightarrow} y$ in $\C$ be a
morphism with $x\neq y$. Then it has a decomposition
$x=x_0\overset{\alpha_1}{\rightarrow}
x_1\overset{\alpha_2}{\rightarrow} \cdots
\overset{\alpha_n}{\rightarrow} x_n=y$ with all $\alpha_i$
unfactorizable; see \cite[Proposition 2.6]{LLi}.

Following \cite[Definition 2.7]{LLi}, we say that a finite EI
category $\C$ satisfies the Unique Factorization Property (UFP), if
whenever a non-isomorphism $\alpha$ has two  decompositions into
unfactorizable morphisms:
  \[ x=x_0\overset{\alpha_1}{\rightarrow}
x_1\overset{\alpha_2}{\rightarrow} \cdots
\overset{\alpha_m}{\rightarrow} x_m=y\] and
\[x=y_0\overset{\beta_1}{\rightarrow}
y_1\overset{\beta_2}{\rightarrow} \cdots
\overset{\beta_n}{\rightarrow} y_n=y,\] then $m=n$, $x_i=y_i$, and
there are $h_i\in \Ac(x_i)$, $1\leq i\leq n-1$ such that
$\beta_1=h_1\circ\alpha_1$, $\beta_2=h_2\circ\alpha_2\circ
h_1^{-1}$, $\cdots$, $\beta_{n-1}=h_{n-1}\circ\alpha_{n-1}\circ
h_{n-2}^{-1}$, $\beta_n=\alpha_n\circ h_{n-1}^{-1}$.

Let $\C$ be a finite EI category. Following \cite{LLi}, we say that
$\C$ is a finite \emph{free} EI category if it satisfies the UFP.
This is an equivalent characterization of finite free EI categories
in ~\cite[Proposition 2.8]{LLi}.

Let $\Gamma=\Gamma_{\C}$ be the corresponding upper triangular
matrix algebra of $\C$. We recall the $\Gamma_t$-module $M_t^*$ from
Notation ~\ref{NOTE} for each $1\leq t\leq n-1$. We have the
following characterization of finite free EI categories.

\begin{prop}\label{MatInterpOfFreeness}
Let $\C$ be a finite skeletal EI category with
$\Obj\C=\{x_1,x_2,\cdots,x_n\}$ satisfying
$\Homc(x_i,x_j)=\emptyset$ if $i<j$. Assume that $\C$ is projective.
Then $\C$ is free if and only if each $M_t^*$ is a projective left
$\Gamma_t$-module for $1\leq t \leq n-1$.
\end{prop}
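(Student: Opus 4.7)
For each $1\le j\le t$, let $U_{t+1,j}\subseteq\Homc(x_{t+1},x_j)$ be the set of unfactorizable morphisms, viewed as a left $R_j$-module via left composition by $\Ac(x_j)$. Since $\C$ is projective, Corollary~\ref{PI} together with Lemma~\ref{GT} (applied to the $\Ac(x_j)$-set $U_{t+1,j}$, whose stabilizers are the left stabilizers appearing in Corollary~\ref{PI}) shows that $kU_{t+1,j}$ is a projective left $R_j$-module. Hence $P:=\bigoplus_{j=1}^{t} i_j(kU_{t+1,j})$ is a projective left $\Gamma_t$-module by Proposition~\ref{ProjInjOverTria}(1). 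The plan is to define a natural surjection $\phi:P\to M_t^*$ by composition, $\phi(\beta\otimes\alpha)=\beta\circ\alpha$ (for $\beta\in M_{ij}$ and $\alpha\in kU_{t+1,j}$), and then show that freeness of $\C$ is equivalent to $\phi$ being an isomorphism, which by the projectivity of $P$ is in turn equivalent to $M_t^*$ being projective. Well-definedness and $\Gamma_t$-linearity of $\phi$ follow from associativity of composition, and surjectivity from \cite[Proposition~2.6]{LLi} (every morphism admits an unfactorizable first step).

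\textbf{Free implies projective.} Assuming UFP, I will construct a two-sided inverse $\psi:M_t^*\to P$. For each $\gamma:x_{t+1}\to x_i$ choose a decomposition $\gamma=\gamma_m\circ\cdots\circ\gamma_1$ into unfactorizables with $\gamma_1$ of codomain $x_{j_1}$, and set $\psi(\gamma)=(\gamma_m\circ\cdots\circ\gamma_2)\otimes\gamma_1$ in the row-$i$ component $M_{i,j_1}\otimes_{R_{j_1}}kU_{t+1,j_1}$ of $i_{j_1}(kU_{t+1,j_1})$, with the convention $\gamma_m\circ\cdots\circ\gamma_2=\Id_{x_i}$ when $m=1$. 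Under UFP, any other such decomposition has the same $j_1$ and is related by elements $h_i\in\Ac(x_{j_i})$; a telescoping of the UFP relations yields $\gamma'_1=h_1\circ\gamma_1$ and $\gamma'_m\circ\cdots\circ\gamma'_2=(\gamma_m\circ\cdots\circ\gamma_2)\circ h_1^{-1}$, so the $R_{j_1}$-tensor identifies the two elements. A direct check shows $\psi$ is $\Gamma_t$-linear and satisfies $\phi\psi=\id$ and $\psi\phi=\id$; hence $M_t^*\cong P$ is projective.

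\textbf{Projective implies free.} Assume $M_t^*$ is projective. Proposition~\ref{ProjInjOverTria}(1) yields $M_t^*\cong\bigoplus_{j=1}^t i_j(Q_j)$ for some projective left $R_j$-modules $Q_j$. I determine the $Q_j$ by downward induction on $j$. The row-$t$ component forces $Q_t\cong M_{t,t+1}=kU_{t+1,t}$, since every morphism $x_{t+1}\to x_t$ is unfactorizable. Assuming $Q_l\cong kU_{t+1,l}$ for $l>j$, the row-$j$ component reads $M_{j,t+1}\cong Q_j\oplus\bigoplus_{l>j}M_{jl}\otimes_{R_l}kU_{t+1,l}$, and by $\Gamma_t$-linearity the embedding of each $i_l(kU_{t+1,l})$-summand at row $j$ is realised through the composition map $M_{jl}\otimes_{R_l}kU_{t+1,l}\to M_{j,t+1}$ (this is forced by what the embedding does at row $l$). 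The directness of the sum then forces (i) each composition map to be injective, and (ii) the images for different $l$ to intersect trivially inside $M_{j,t+1}$; and $Q_j$ must complement the span of the factorizable morphisms, so $Q_j\cong kU_{t+1,j}$. Conditions (i)--(ii) are exactly the first-step UFP at $x_{t+1}$: any identity $\beta\alpha=\beta'\alpha'$ with $\alpha,\alpha'$ unfactorizable forces $\alpha,\alpha'$ to share a codomain and lie in the same $\Ac$-orbit. Running this argument for every $t$ gives first-step UFP at each $x_s$ with $2\le s\le n$, and an induction on decomposition length upgrades it to full UFP; hence $\C$ is free.

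\textbf{Main obstacle.} The delicate step is in the converse: extracting, from the mere abstract $\Gamma_t$-isomorphism $M_t^*\cong\bigoplus_j i_j(Q_j)$, the identification of each summand embedding at lower rows with the structural composition map. This identification is automatic from $\Gamma_t$-linearity together with the explicit form of the structural maps $\varphi_{j\ell}$ in the definition of $i_\ell$, but requires careful bookkeeping to pin down. Once this compatibility is in place, the injectivity and direct-sum conclusions, and hence UFP, follow. If this direct route proves awkward, a fallback is to apply Krull--Schmidt to the split decomposition $P\cong M_t^*\oplus\ker\phi$ and identify $\ker\phi$ row by row as a projective summand whose vanishing is equivalent to UFP.
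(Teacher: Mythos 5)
Your ``free implies projective'' direction is essentially the paper's: you build a surjection $\phi$ from a projective module onto $M_t^*$ by composition (the paper calls it $\Phi$) and use UFP to construct a well-defined two-sided inverse. This part is sound.

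The gap is in the converse. You take an abstract decomposition $M_t^*\cong\bigoplus_j i_j(Q_j)$ and then argue that ``by $\Gamma_t$-linearity the embedding of each $i_l(kU_{t+1,l})$-summand at row $j$ is realised through the composition map $M_{jl}\otimes_{R_l}kU_{t+1,l}\to M_{j,t+1}$.'' But what $\Gamma_t$-linearity actually gives is that the row-$j$ component of the embedding is $\beta\otimes q\mapsto\beta\circ f_l(q)$, where $f_l\colon Q_l\hookrightarrow M_{l,t+1}$ is the row-$l$ component. The downward induction only tells you that $Q_l$ is \emph{abstractly isomorphic} to $kU_{t+1,l}$; it does not tell you that the image $f_l(Q_l)$ inside $M_{l,t+1}$ literally equals $kU_{t+1,l}$. (It is a priori just some complement of the span of factorizable morphisms, and there may be many such complements.) Consequently, the directness you extract concerns the maps $M_{jl}\otimes_{R_l}f_l(Q_l)\to M_{j,t+1}$, not the structural composition maps $M_{jl}\otimes_{R_l}kU_{t+1,l}\to M_{j,t+1}$, and the intended conclusions (i)--(ii) do not follow. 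You flag this yourself as the ``delicate step'' and say it ``requires careful bookkeeping,'' but the bookkeeping you sketch does not actually close it; nor does the fallback, since ``$P\cong M_t^*\oplus\ker\phi$ with $M_t^*$ projective'' by itself does not force $\ker\phi=0$.

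The paper's resolution is a different, decisive idea: it shows (Lemma \ref{PC}) that the \emph{specific} map $\Phi\colon\bigoplus_{t}i_t(M^0_{tn})\to M^*_{n-1}$ is a \emph{projective cover}, by explicitly computing both tops and seeing that they agree componentwise as $M^0_{in}/\mathrm{rad}(R_i)M^0_{in}$. Once $\Phi$ is known to be a projective cover, projectivity of $M^*_{n-1}$ immediately forces $\Phi$ to be an isomorphism, which pins down the composition maps literally (not just up to abstract isomorphism) and yields freeness. This top-comparison step is the missing ingredient in your argument. A secondary point: your final sentence, asserting that first-step UFP at each object upgrades to full UFP, is the content of the paper's Lemma \ref{EC2}; it is true and the induction is routine, but it should be spelled out rather than asserted.
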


Before giving the proof of the proposition, we make some
preparations.

\begin{defi}
\emph{Let $\C$ be a finite EI category and $x\in {\rm Obj}\C$. We
say that $\C$ is
 }free from $x$ \emph{if whenever an arbitrary non-isomorphism $x\overset{\alpha}{\rightarrow}
y$ in $\C$ has two decompositions $ x\overset{\alpha_1}{\rightarrow}
z_1\overset{\alpha_2}{\rightarrow}y$ and
$x\overset{\beta_1}{\rightarrow} z_2\overset{\beta_2}{\rightarrow}
y$ with $\alpha_1$ and $\beta_1$ unfactorizable, then $z_1=z_2$ and
there is an endomorphism $h\in \Ac(z_1)$ such that $\beta_1=h\circ
\alpha_1$ and $\beta_2=\alpha_2 \circ h^{-1}$.}
\end{defi}

\begin{lem}\label{EC2}
Let $\C$ be a finite EI category. Then $\C$ is free if and only if
$\C$ is free from any object.
\end{lem}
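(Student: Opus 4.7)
The plan is to prove both implications by passing through the UFP: for ``only if" I would refine the $2$-step factorizations occurring in the ``free from $x$" condition into full unfactorizable decompositions and invoke UFP, while for ``if" I would apply ``free from $x$" iteratively to peel off one unfactorizable at a time from two given unfactorizable decompositions.

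For the ``only if" direction, fix a non-isomorphism $\alpha\colon x\to y$ with two factorizations $\alpha=\alpha_2\circ\alpha_1=\beta_2\circ\beta_1$ where $\alpha_1,\beta_1$ are unfactorizable, and split into cases according to whether $\alpha_2$ is an isomorphism. If so, $\alpha$ itself is unfactorizable by \cite[Proposition 2.5]{LLi}, which forces $\beta_2$ to be an isomorphism as well (otherwise further decomposing $\beta_2$ would give $\alpha$ an unfactorizable decomposition of length $\geq 2$, contradicting UFP); setting $h=\beta_2^{-1}\circ\alpha_2\in\Ac(y)$ yields $\beta_1=h\circ\alpha_1$ and $\beta_2=\alpha_2\circ h^{-1}$, and skeletality identifies $z_1=y=z_2$. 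Otherwise, decompose $\alpha_2=\gamma_k\circ\cdots\circ\gamma_1$ and $\beta_2=\delta_l\circ\cdots\circ\delta_1$ into unfactorizables; applying UFP to the two resulting length-$(k{+}1)$ and length-$(l{+}1)$ unfactorizable decompositions of $\alpha$ produces $k=l$, $z_1=z_2$, and automorphisms $h_1,\ldots,h_k$, and then a telescoping computation $\delta_k\circ\cdots\circ\delta_1=\gamma_k\circ\cdots\circ\gamma_1\circ h_1^{-1}=\alpha_2\circ h^{-1}$ with $h=h_1$ gives the ``free from $x$" conclusion.

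For the ``if" direction, I would induct on the length $m$ of the first decomposition $\alpha_m\circ\cdots\circ\alpha_1$. In the inductive step, applying ``free from $x$" to the factorizations $\alpha=(\alpha_m\circ\cdots\circ\alpha_2)\circ\alpha_1$ and $\alpha=(\beta_n\circ\cdots\circ\beta_2)\circ\beta_1$ yields $x_1=y_1$ and $h\in\Ac(x_1)$ with $\beta_1=h\circ\alpha_1$ and $\beta_n\circ\cdots\circ\beta_2=(\alpha_m\circ\cdots\circ\alpha_2)\circ h^{-1}$. Since $\beta_2\circ h$ remains unfactorizable by \cite[Proposition 2.5]{LLi}, the morphism $\alpha_m\circ\cdots\circ\alpha_2\colon x_1\to y$ inherits the two unfactorizable decompositions $(\alpha_2,\ldots,\alpha_m)$ and $(\beta_2\circ h,\beta_3,\ldots,\beta_n)$ of lengths $m-1$ and $n-1$, so the inductive hypothesis supplies $m=n$, identifies $x_i=y_i$, and produces automorphisms $h_2,\ldots,h_{m-1}$; together with $h_1=h$ these verify the UFP conditions. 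The base case $m=1$ reduces to showing $n=1$: if $n\geq 2$, unfactorizability of $\alpha$ in the factorization $\alpha=\beta_n\circ(\beta_{n-1}\circ\cdots\circ\beta_1)$ forces $\beta_{n-1}\circ\cdots\circ\beta_1$ to be an isomorphism (since $\beta_n$ is not), whence $\beta_1$ is a split monomorphism and thus an isomorphism in a finite EI category, contradicting unfactorizability of $\beta_1$.

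The main obstacle I expect is the bookkeeping in the inductive step: one must check that the identity $\beta_n\circ\cdots\circ\beta_2=(\alpha_m\circ\cdots\circ\alpha_2)\circ h^{-1}$ furnished by ``free from $x$" is precisely what makes the inductive hypothesis on $\alpha_m\circ\cdots\circ\alpha_2$ return automorphisms fitting the exact UFP pattern $\beta_i=h_i\circ\alpha_i\circ h_{i-1}^{-1}$ with $\beta_n=\alpha_n\circ h_{n-1}^{-1}$, rather than matching only up to some unspecified automorphism. An auxiliary point is the small folklore fact that split monomorphisms in a finite EI category are isomorphisms, which follows because $f\circ g$ is then an endomorphism and hence an isomorphism by the EI condition; this should be isolated as a one-line observation for use in the base case.
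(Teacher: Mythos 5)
Your proof is correct and, for the ``if'' direction, takes essentially the same approach as the paper: the paper's phrase ``we continue this process'' is exactly your induction on the length $m$, peeling off one unfactorizable morphism at a time via ``free from $x$'' and then applying ``free from $x_1$'' (and so on) to the truncated morphism $\alpha_m\circ\cdots\circ\alpha_2$. Your version is more careful in isolating the base case and in checking that $\beta_2\circ h_1$ is still unfactorizable so that the shorter decomposition qualifies for the inductive hypothesis, which is precisely the bookkeeping the paper glosses over. For the ``only if'' direction the paper simply declares it trivial, whereas you supply the argument: decompose the outer factors $\alpha_2,\beta_2$ into unfactorizables, apply UFP, and telescope the resulting conjugating automorphisms down to the single $h=h_1$. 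This deserves to be spelled out because the factorizations in ``free from $x$'' are \emph{not} full unfactorizable decompositions, so UFP does not apply to them directly. Your isolated observation that split monomorphisms are isomorphisms in an EI category (since $f\circ g$ is an idempotent endomorphism, hence an isomorphism, hence the identity) is correct and useful. One small point to tighten: in the inductive step of the ``if'' direction you implicitly assume $n\geq 2$; if $n=1$ then $\alpha=\beta_1$ is itself unfactorizable, and the identity $\id=(\alpha_m\circ\cdots\circ\alpha_2)\circ h^{-1}$ furnished by ``free from $x$'' makes $\alpha_m$ a split epimorphism, hence an isomorphism, contradicting unfactorizability when $m\geq 2$ — so $n\geq 2$ is automatic, but it is worth a sentence.
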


\begin{proof}
The ``only if'' part is trivial. For the ``if'' part, assume that
$\C$ is free from any object. Let $x\overset{\alpha}{\rightarrow} y$
be a non-isomorphism in $\C$. Assume that $\alpha$ has two
decompositions into unfactorizable morphisms:
  \[ x=x_0\overset{\alpha_1}{\rightarrow}
x_1\overset{\alpha_2}{\rightarrow} \cdots
\overset{\alpha_m}{\rightarrow} x_m=y\] and
\[x=y_0\overset{\beta_1}{\rightarrow}
y_1\overset{\beta_2}{\rightarrow} \cdots
\overset{\beta_n}{\rightarrow} y_n=y.\] Since $\C$ is free from $x$,
we have $x_1=y_1$, and there is an endomorphism $h_1\in \Ac(x_1)$
such that $\beta_1=h_1\circ\alpha_1$ and $\alpha_m\circ \cdots
\alpha_2=\beta_n\circ \cdots \beta_2\circ h_1$. We continue this
process. We obtain that $m=n$, $x_i=y_i$, and there are $h_i\in
\Ac(x_i)$, $1\leq i\leq m-1$ such that $\beta_1=h_1\circ\alpha_1$,
$\beta_2=h_2\circ\alpha_2\circ h_1^{-1}$, $\cdots$,
$\beta_{m-1}=h_{m-1}\circ \alpha_{m-1}\circ h_{m-2}^{-1}$,
$\beta_m=\alpha_m\circ h_{m-1}^{-1}$. Then $\C$ is free.
\end{proof}

Let $W_{il}\subseteq \Homc(x_l,x_i)$ and $T_{lj}\subseteq
\Homc(x_j,x_l)$ be subsets. Denote the subset $W_{il}\circ T_{lj} =
\{f\circ g\mid f\in W_{il} \text{ and } g\in T_{lj}\}\subseteq
\Homc(x_j,x_i)$.

\begin{note}\label{NO}
\emph{Set $\Ho(x_j,x_i)=\{\alpha\in \Homc(x_j,x_i)\mid \alpha \text{
is unfactorizable}\}$. Denote $M^0_{ij}=k\Ho(x_j,x_i)$. Then
$M^0_{ij}$ is an $R_i$-$R_j$-subbimodule of $M_{ij}$. Moreover,
\[M_{ij}=M^0_{ij}\oplus (\sum\limits_{l=i+1}^{j-1}
  k(\Homc(x_l,x_i)\circ\Ho(x_j,x_l)))\] as an $R_i$-$R_j$-bimodule.}
\end{note}

\begin{lem}\label{DU}
Let $\C$ be a finite EI category with
$\Obj\C=\{x_1,x_2,\cdots,x_n\}$ satisfying
$\Homc(x_i,x_j)=\emptyset$ if $i<j$. Assume that $\C$ is free from
$x_j$. Then for any $1\leq i< j$, we have
\[\Homc(x_j,x_i)=\bigsqcup\limits_{l=i}^{j-1}
  (\Homc(x_l,x_i)\circ\Ho(x_j,x_l)),\] where the right hand side is
  a disjoint union.
\end{lem}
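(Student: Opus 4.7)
The plan is to prove Lemma~\ref{DU} by establishing its two assertions separately: that the right hand side equals $\Homc(x_j,x_i)$, and that it is a disjoint union. Both parts will follow directly from material already assembled in the excerpt.

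For the equality, the containment ``$\supseteq$'' is immediate from the ambient composition, so the content lies in ``$\subseteq$''. Given $\alpha \in \Homc(x_j,x_i)$, note that $x_j \neq x_i$ since $i<j$ and $\C$ is skeletal, so by \cite[Proposition 2.6]{LLi} the morphism $\alpha$ admits a decomposition
\[x_j = z_0 \overset{\alpha_1}{\rightarrow} z_1 \overset{\alpha_2}{\rightarrow} \cdots \overset{\alpha_m}{\rightarrow} z_m = x_i\]
in which every $\alpha_k$ is unfactorizable. Writing $z_1 = x_l$, we then have $\alpha_1 \in \Ho(x_j,x_l)$ and $\alpha_m \circ \cdots \circ \alpha_2 \in \Homc(x_l,x_i)$, placing $\alpha$ in the $l$-th summand. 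The ordering on $\Obj\C$, combined with the fact that unfactorizable morphisms are never automorphisms, forces the index sequence of the $z_k$ to strictly decrease, yielding $i \leq l \leq j-1$.

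For disjointness, suppose $\alpha$ lies simultaneously in the $l_1$-th and $l_2$-th summands, so that $\alpha = \beta_1 \circ \gamma_1 = \beta_2 \circ \gamma_2$ with $\gamma_s \in \Ho(x_j,x_{l_s})$ and $\beta_s \in \Homc(x_{l_s},x_i)$. Since $x_j \neq x_i$ in the skeletal EI category $\C$, the morphism $\alpha$ is a non-isomorphism, and the hypothesis that $\C$ is free from $x_j$ applies verbatim to these two factorizations of $\alpha$ to give $x_{l_1} = x_{l_2}$, whence $l_1 = l_2$.

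No serious obstacle is anticipated: the definition of ``free from $x_j$'' is tailored precisely to the uniqueness of the intermediate object in two-step factorizations whose first arrow is unfactorizable, and it applies equally in the boundary case $l_s = i$, where $\beta_s$ is forced to be an automorphism by the EI condition.
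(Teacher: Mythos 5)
Your proof is correct and follows the same two-step strategy as the paper's (which is quite terse): disjointness is extracted from the ``free from $x_j$'' hypothesis applied to a pair of factorizations with unfactorizable first arrows, and coverage comes from decomposing any morphism in $\Homc(x_j,x_i)$ into unfactorizables via \cite[Proposition 2.6]{LLi}. You merely fill in the details the paper leaves implicit, including the bound $i\leq l\leq j-1$ from the strictly decreasing index sequence and the observation that $\alpha$ is automatically a non-isomorphism because the ordering on objects forces $\C$ to be skeletal.
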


\begin{proof}
Recall that the category $\C$ is free from $x_j$. Then we have
\[\Homc(x_{l_1},x_i)\circ\Ho(x_j,x_{l_1})\cap
\Homc(x_{l_2},x_i)\circ\Ho(x_j,x_{l_2})= \phi\] for $l_1\neq l_2$.
Since every morphism can be decomposed as a composition of
unfactorizable morphisms, we have the required equation.
\end{proof}

We observe that there is a surjective morphism
\begin{align}\label{SUR1}
\xi_{ilj}:M_{il}\otimes_{R_l} M^0_{lj}\longrightarrow
k(\Homc(x_l,x_i)\circ\Ho(x_j,x_l))
\end{align}
of $R_i$-$R_j$-bimodules sending $\beta\otimes \alpha$ to
$\beta\circ \alpha$ for $i<l<j$.

We recall the $\Gamma_{n-1}$-module $i_t(A)$ for each $1\leq t\leq
n-1$ and each $R_t$-module $A$; see ~(\ref{ISO}). Then we have the
following natural surjective morphisms
\begin{align}\label{SUR2}
\Phi_i:M^0_{in}\oplus(\bigoplus\limits_{l=i+1}^{n-1}
(M_{il}\otimes_{R_l} M^0_{ln}))\longrightarrow M_{in}
\end{align}
of $R_i$-$R_n$-bimodules induced by $\xi_{iln}$ for $1\leq i<l\leq
n-1$, and
\begin{align}\label{SUR3}
\Phi:\bigoplus\limits_{t=1}^{n-1}i_t(M^0_{tn})\longrightarrow
M^*_{n-1}
\end{align}
of $\Gamma_{n-1}$-$R_n$-bimodules induced by $\Phi_i$ for $1\leq
i\leq n-1$.

\begin{lem}\label{PC}
Let $\C$ be a finite EI category with
$\Obj\C=\{x_1,x_2,\cdots,x_n\}$ satisfying
$\Homc(x_i,x_j)=\emptyset$ if $i<j$. Assume that $\C$ is projective.
Then the above surjective morphism $\Phi$ is a projective cover of
the left $\Gamma_{n-1}$-module $M^*_{n-1}$.
\end{lem}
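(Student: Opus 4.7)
The plan is to verify the two characterizing properties of a projective cover: that $P := \bigoplus_{t=1}^{n-1} i_t(M^0_{tn})$ is a projective left $\Gamma_{n-1}$-module, and that $\Phi$ induces an isomorphism $P/JP \to M^*_{n-1}/J M^*_{n-1}$ on tops, where $J := \mathrm{rad}(\Gamma_{n-1})$. Since $\Gamma_{n-1}$ is a finite-dimensional $k$-algebra, these two conditions together are exactly what is needed.

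For projectivity of the source, Notation~\ref{NO} presents $M^0_{tn}$ as an $R_t$-$R_n$-bimodule direct summand of $M_{tn}$. Since $\C$ is projective, $M_{tn}$, and hence $M^0_{tn}$, is a projective left $R_t$-module; Proposition~\ref{ProjInjOverTria}(1) then yields that each $i_t(M^0_{tn})$, and therefore $P$, is projective over $\Gamma_{n-1}$.

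To compute the tops I would unwind the column-vector descriptions. In $i_t(M^0_{tn})$ the components at positions above $t$ vanish, while for $l<t$ the structure map $\varphi_{lt}$ is the canonical isomorphism $M_{lt}\otimes_{R_t}(R_t\otimes_{R_t}M^0_{tn}) \to M_{lt}\otimes_{R_t}M^0_{tn}$, so left multiplication by $M_{lt}\subseteq J$ already sends the position-$t$ component onto the position-$l$ component. Hence $\mathrm{top}(i_t(M^0_{tn}))$ is $M^0_{tn}/\mathrm{rad}(R_t)M^0_{tn}$, concentrated at position $t$. For $M^*_{n-1}$ at position $l$, the $J$-contribution is $\mathrm{rad}(R_l)M_{ln} + \sum_{j=l+1}^{n-1}\psi_{ljn}(M_{lj}\otimes_{R_j}M_{jn})$; factoring any morphism $x_n\to x_j$ through an unfactorizable morphism (\cite[Proposition~2.6]{LLi}) identifies this latter sum with $\sum_{j=l+1}^{n-1} k(\Homc(x_j,x_l)\circ\Ho(x_n,x_j))$, whose complement in $M_{ln}$ is precisely $M^0_{ln}$ by Notation~\ref{NO}. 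Thus the top of $M^*_{n-1}$ at position $l$ also equals $M^0_{ln}/\mathrm{rad}(R_l)M^0_{ln}$.

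Finally, the restriction of $\Phi$ to $i_t(M^0_{tn})$ is the natural inclusion $M^0_{tn}\hookrightarrow M_{tn}$ at position $t$, which on tops becomes the identity on $M^0_{tn}/\mathrm{rad}(R_t)M^0_{tn}$; summing over $t$ gives the required isomorphism of tops. The main substantive step is the identification of $\sum_j\psi_{ljn}(M_{lj}\otimes M_{jn})$ with $\sum_j k(\Homc(x_j,x_l)\circ\Ho(x_n,x_j))$: this is where the factorization theory of finite EI categories enters and meets the projectivity hypothesis to force the tops on both sides to agree.
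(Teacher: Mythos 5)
Your proof is correct and follows essentially the same route as the paper: projectivity of the source via Proposition~\ref{ProjInjOverTria}(1), then comparing tops using $\mathrm{rad}(\Gamma_{n-1})$ and the decomposition of $M_{ln}$ from Notation~\ref{NO}. You have simply spelled out the ``similar calculation'' the paper leaves implicit -- in particular the identification of $\sum_{j>l}\psi_{ljn}(M_{lj}\otimes M_{jn})$ with $\sum_{j>l} k(\Homc(x_j,x_l)\circ\Ho(x_n,x_j))$ via the factorization-through-unfactorizables argument -- which is a welcome addition but not a different method.
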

\begin{proof}
Since the category $\C$ is projective, we have that $M_{tn}$ is a
projective left $R_t$-module for each $1\leq t\leq n-1$. Then each
$M^0_{tn}$ is a projective left $R_t$-module, since it is a direct
summand of $M_{tn}$; see Notation \ref{NO}. By
Proposition~\ref{ProjInjOverTria}(1), $\bigoplus \limits_{t=1}^{n-1}
i_t(M^0_{tn})$ is a projective left $\Gamma_{n-1}$-module.

To prove that $\Phi$ is a projective cover, it suffices to show that
${\rm top}(\bigoplus\limits_{t=1}^{n-1}i_t(M^0_{tn}))$ and ${\rm
top}(M^*_{n-1})$ are isomorphic. Here, we write ${\rm top}X=X/{\rm
rad}X$ for a module $X$, where ${\rm rad}X$ denotes the radical of
$X$. Recall ${\rm rad}(\Gamma_{n-1})=\left(
          \begin{array}{cccc}
            {\rm rad}(R_1) & M_{12} & \cdots & M_{1,n-1} \\
             & {\rm rad}(R_2) & \cdots & M_{2,n-1} \\
             &  & \ddots & \vdots \\
                &  &  & {\rm rad}(R_{n-1}) \\
          \end{array}
        \right)$. By ${\rm
rad}(\bigoplus\limits_{t=1}^{n-1}i_t(M^0_{tn}))={\rm
rad}(\Gamma_{n-1})(\bigoplus\limits_{t=1}^{n-1}i_t(M^0_{tn}))$, we
compute that the $i$-th component of ${\rm
top}(\bigoplus\limits_{t=1}^{n-1}i_t(M^0_{tn}))$ is isomorphic to
$M^0_{in}/{\rm rad}(R_i)M^0_{in}$. By a similar calculation, we have
that the $i$-th component of ${\rm top}(M^*_{n-1})$ is isomorphic to
$M^0_{in}/{\rm rad}(R_i)M^0_{in}$. Then we have the required
isomorphism.
\end{proof}

\begin{lem}\label{EC}
Let $\C$ be a finite EI category with
$\Obj\C=\{x_1,x_2,\cdots,x_n\}$ satisfying
$\Homc(x_i,x_j)=\emptyset$ if $i<j$. Assume that $\C$ is projective.
Then the following are equivalent.
\begin{enumerate}
\item  The category $\C$ is free from $x_n$.
\item  All the surjective morphisms $\Phi_i$ are isomorphisms.
\item  The surjective morphism $\Phi$ is an isomorphism.
\item  The left $\Gamma_{n-1}$-module $M^*_{n-1}$ is projective.
\end{enumerate}
\end{lem}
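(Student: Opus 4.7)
The plan is to split the equivalence into two blocks: first establish $(2)\Leftrightarrow(3)\Leftrightarrow(4)$ by structural observations, then prove $(1)\Leftrightarrow(2)$ by a dimension count that exploits the projectivity of $\C$.

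For $(2)\Leftrightarrow(3)$ I would unfold the column-vector description: the $i$-th entry of $\bigoplus_{t=1}^{n-1}i_t(M^0_{tn})$ is $M^0_{in}\oplus\bigoplus_{t=i+1}^{n-1}(M_{it}\otimes_{R_t}M^0_{tn})$ (since $t<i$ contributes $0$, $t=i$ contributes $R_i\otimes_{R_i}M^0_{in}\cong M^0_{in}$, and $t>i$ contributes $M_{it}\otimes_{R_t}M^0_{tn}$), the $i$-th entry of $M^*_{n-1}$ is $M_{in}$, and $\Phi$ agrees with $\Phi_i$ on each entry; so $\Phi$ is an isomorphism if and only if every $\Phi_i$ is. For $(3)\Leftrightarrow(4)$ I would invoke Lemma~\ref{PC}: since $\Phi$ is a projective cover, it is an isomorphism precisely when its target $M^*_{n-1}$ is projective.

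For $(1)\Rightarrow(2)$, fix $i$. Lemma~\ref{DU} gives $\Homc(x_n,x_i)=\bigsqcup_{l=i}^{n-1}\Homc(x_l,x_i)\circ\Ho(x_n,x_l)$, which $k$-linearizes to a direct-sum decomposition of $M_{in}$ matching the codomain of $\Phi_i$; it therefore suffices to show each $\xi_{iln}$ is injective for $l>i$. Set $X=\Homc(x_l,x_i)$, $Y=\Ho(x_n,x_l)$, $G=\Ac(x_l)$. Lemma~\ref{GT} and projectivity of $\C$ make $|G|$ and every $|L_y|$ ($y\in Y$) invertible in $k$. Decomposing $kY$ into transitive summands $k(G/L_y)$ and converting coinvariants to invariants via idempotent averaging, I would compute
\[\dim_k\bigl(kX\otimes_{kG}kY\bigr)=\sum_{[y]\in G\backslash Y}|X/L_y|=\frac{1}{|G|}\sum_{(x,y)\in X\times Y}|R_x\cap L_y|.\]
Burnside's lemma applied to the diagonal action $g\cdot(x,y)=(xg^{-1},gy)$ yields $|(X\times Y)/G|=\tfrac{1}{|G|}\sum_{x,y}|R_x\cap L_y|$, and freeness from $x_n$ says exactly that the natural surjection $(X\times Y)/G\twoheadrightarrow X\circ Y$ is bijective (given $\beta_1\alpha_1=\beta_2\alpha_2$ with $\alpha_j\in Y$, freeness supplies $h\in G$ with $\alpha_2=h\alpha_1$ and $\beta_2=\beta_1 h^{-1}$, i.e.\ $(\beta_2,\alpha_2)=h\cdot(\beta_1,\alpha_1)$). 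Combining, $\dim_k(kX\otimes_{kG}kY)=|X\circ Y|$ and $\xi_{iln}$ is an isomorphism.

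For $(2)\Rightarrow(1)$ I would run the preceding argument in reverse. Both the dimension formula and the Burnside identity hold whenever $|G|$ is invertible, so injectivity of each $\xi_{iln}$ forces $(X\times Y)/G\to X\circ Y$ to be bijective, supplying the required $h\in\Ac(x_l)$ whenever two factorizations of the same morphism pass through the same intermediate object $x_l$. The remaining case is handled by the directness of the sum in the domain of $\Phi_i$: if $\beta_1\alpha_1=\beta_2\alpha_2$ with $\alpha_j\in\Ho(x_n,x_{l_j})$ and $l_1\neq l_2$, then $\beta_1\otimes\alpha_1$ and $\beta_2\otimes\alpha_2$ are nonzero elements of distinct direct summands whose images agree, so their difference is a nonzero element of $\ker\Phi_i$, contradicting injectivity; hence $l_1=l_2$. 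These two observations together are exactly freeness from $x_n$. The main obstacle is the dimension-counting step: matching the coinvariant-to-invariant reduction (which requires projectivity to average) against Burnside's lemma is where the real content sits, while the column unfolding and the projective-cover lemma do the rest of the structural work.
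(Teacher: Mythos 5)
Your proof is correct and, for the equivalence $(1)\Leftrightarrow(2)$, takes a genuinely different route from the paper. Where you count dimensions, the paper writes down an explicit two-sided inverse: it shows that freeness from $x_n$ makes the assignment $\beta\circ\alpha\mapsto\beta\otimes\alpha$ a well-defined bimodule map $\eta_{iln}\colon k(\Homc(x_l,x_i)\circ\Ho(x_n,x_l))\to M_{il}\otimes_{R_l}M^0_{ln}$, then checks $\xi_{iln}$ and $\eta_{iln}$ are mutually inverse; for $(2)\Rightarrow(1)$ it reads off only the disjointness of the pieces of $\Homc(x_n,x_i)$ from the injectivity of $\Phi_i$ and asserts that this gives freeness, leaving implicit the production of the intertwining automorphism $h$. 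Your dimension count, reducing injectivity of $\xi_{iln}$ to bijectivity of the set map $(X\times Y)/G\to X\circ Y$, makes that step explicit and is in that respect more complete than the paper's one-liner. Two remarks on your route. First, the identity $\dim_k(kX\otimes_{kG}kY)=|(X\times Y)/G|$ is characteristic-free: $kX\otimes_{kG}kY$ is the $G$-coinvariants of the permutation module $k(X\times Y)$ with the twisted diagonal action, and coinvariants of any permutation module $kZ$ are freely spanned by the orbits $Z/G$, with no averaging and no stabilizer invertibility required. So the Burnside double count, the coinvariant-to-invariant reduction, and the invertibility of $|G|$ and $|L_y|$ are not actually needed at this point; they make the argument look as if it uses projectivity of $\C$, which it does not. (Projectivity genuinely enters only through Lemma~\ref{PC} in the step $(3)\Leftrightarrow(4)$.) Second, and relatedly, projectivity of $\C$ does guarantee that each $|L_y|$ is invertible, but it does \emph{not} in general make $|\Ac(x_l)|$ invertible; your argument is unharmed because the Burnside equalities are integer identities, but the claim as written is misleading. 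In the end both approaches buy the same result: the paper's explicit $\eta_{iln}$ is the more economical argument, while your count makes the combinatorial bijection $(X\times Y)/G\cong X\circ Y$, and hence the role of the unique-factorization condition, completely transparent.
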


\begin{proof}
``(1)$\Rightarrow$ (2)" Since the category $\C$ is free from $x_n$,
by Notation \ref{NO} and Lemma \ref{DU} we have
\[M_{in}=M^0_{in}\oplus (\bigoplus\limits_{l=i+1}^{n-1}
  k(\Homc(x_l,x_i)\circ\Ho(x_n,x_l))).\] Since
  $\Phi_i$ is induced by $\xi_{iln}$, we only need to prove that
  $\xi_{iln}$ is an isomorphism for each $i<l<n$. Indeed, since $\C$ is free from $x_n$, we
have $\beta'\circ \alpha'=\beta\circ \alpha$ in
$\Homc(x_l,x_i)\circ\Ho(x_n,x_l)$ if and only if $\beta'=\beta\circ
g$ and $\alpha'=g^{-1}\circ \alpha$ for some $g\in \Ac(x_l)$. Then
we have a well-defined morphism
\[\eta_{iln}: k(\Homc(x_l,x_i)\circ\Ho(x_n,x_l)) \longrightarrow
M_{il}\otimes_{R_l} M^0_{ln}\] of $R_i$-$R_n$-bimodules sending
$\beta\circ \alpha$ to $\beta\otimes \alpha$. It is directly verify
that $\xi_{iln}$ and $\eta_{iln}$ are mutually inverse. Then we have
the required isomorphisms.

``(2)$\Rightarrow$ (1)" Since each $\Phi_i$ is an isomorphism for
$1\leq i\leq n-1$, we have
\[\Homc(x_{l_1},x_i)\circ\Ho(x_n,x_{l_1})\cap
\Homc(x_{l_2},x_i)\circ\Ho(x_n,x_{l_2})= \phi\] for $i<l_1\neq l_2<
n$, which implies that $\C$ is free from $x_n$.

``(2)$\Leftrightarrow$ (3)" It is obvious, since $\Phi$ is induced
by $\Phi_1, \cdots, \Phi_{n-1}$.

``(3)$\Leftrightarrow$ (4)" Apply Lemma \ref{PC}.
\end{proof}

\begin{proof}[Proof of Proposition {\rm \ref{MatInterpOfFreeness}}.]
Assume that $\C$ is projective. For each $1\leq t \leq n-1$, we
consider the full subcategory $\C_{t}$ of $\C$ with
$\Obj\C_{t}=\{x_1,\cdots,x_t\}$. We observe that $\C_{t}$ is free
from $x_t$ if and only if $\C$ is free from $x_t$. Then by Lemma
\ref{EC}, we have that $\C$ is free from $x_t$ if and only if
$M_t^*$ is a projective left $\Gamma_t$-module. By Lemma \ref{EC2},
we are done.
\end{proof}

\section{The main results}

In this section, we give a necessary and sufficient condition on
when the category algebra $k\C$ of a finite EI category $\C$ is
Gorenstein, and when $k\C$ is $1$-Gorenstein.

Throughout this section, when the category $\C$ is skeletal, we
assume that $\Obj\C=\{x_1,x_2,\cdots,x_n\}$ satisfying
$\Homc(x_i,x_j)=\emptyset$ if $i<j$.
\begin{prop}\label{GP}
 Let $k$ be a field and $\C$ be a finite EI category. Then the category algebra $k\C$ is Gorenstein if and only if  $\C$ is projective over $k$.
\end{prop}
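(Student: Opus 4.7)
The plan is to reduce Proposition~\ref{GP} to Proposition~\ref{XChen1} via the matrix description $k\C\cong \Gamma_{\C}$ from Notation~\ref{NOTE}, together with the fact that the diagonal entries $R_i = k\Ac(x_i)$ are group algebras of finite groups, hence Frobenius, hence quasi-Frobenius (equivalently, $0$-Gorenstein). As a consequence any $R_i$-module has finite projective dimension if and only if it has projective dimension zero, i.e.\ is projective---the ``well-known fact'' already invoked on page 1 after Proposition~\ref{XChen1'}.

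First I would reduce to the skeletal case: by \cite[Proposition 2.2]{PWebb2}, $k\C$ is Morita equivalent to $k\C_0$ for any skeleton $\C_0$, and Gorensteinness is a Morita invariant, so I may assume $\Obj\C=\{x_1,\dots,x_n\}$ with $\Homc(x_i,x_j)=\emptyset$ for $i<j$, placing us in the setup of Notation~\ref{NOTE}. If $n=1$, then $k\C=k\Ac(x_1)$ is a group algebra, hence quasi-Frobenius, hence Gorenstein, and $\C$ is vacuously projective, so the statement is trivial. Assume $n\ge 2$.

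Now apply Proposition~\ref{XChen1} to the upper triangular matrix algebra $\Gamma_{\C}$ with diagonal rings $R_i=k\Ac(x_i)$: since all $R_i$ are (quasi-Frobenius, in particular) Gorenstein, $k\C\cong\Gamma_{\C}$ is Gorenstein if and only if every bimodule $M_{ij}=k\Homc(x_j,x_i)$, $i<j$, is finitely generated and has finite projective dimension on both sides. Finite generation is automatic because $\C$ is a finite category, so $M_{ij}$ is already a finite dimensional $k$-vector space. The conclusion then collapses to: $k\C$ is Gorenstein iff each $M_{ij}$ has finite projective dimension as both a left $R_i$-module and a right $R_j$-module. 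Since $R_i$ and $R_j$ are quasi-Frobenius, finite projective dimension over these rings is equivalent to being projective. Thus the condition becomes: each $M_{ij}$ is projective on both sides, which is exactly Definition~\ref{DefOfProjCat}, i.e.\ $\C$ is projective over $k$.

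There is no real obstacle here; the proof is essentially a translation step. The only point to watch is that the bimodule finiteness hypothesis in Proposition~\ref{XChen1} is satisfied on both sides automatically from $|\Mor\C|<\infty$, and that the ``finite projective dimension $\Leftrightarrow$ projective'' reduction uses the quasi-Frobenius nature of group algebras on both the left and the right side (equivalently, one can pass to $\Gamma^{\mathrm{op}}$, which again has quasi-Frobenius diagonal entries $R_i^{\mathrm{op}}$ since $kG^{\mathrm{op}}\cong kG$ for a group $G$).
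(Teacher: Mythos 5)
Your proof is correct and follows essentially the same route as the paper: reduce to the skeletal case by Morita invariance, identify $k\C$ with $\Gamma_{\C}$, use that each $R_i=k\Ac(x_i)$ is a selfinjective (quasi-Frobenius) finite-dimensional algebra so that finite projective dimension collapses to projectivity, and apply Proposition~\ref{XChen1}. The only slight additions on your side are the explicit $n=1$ case and the remark on automatic finite generation, both of which the paper leaves implicit.
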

\begin{proof}
Without loss of generality, we assume that $\C$ is skeletal.
Otherwise, we take its skeleton $\C_0$, which is equivalent to $\C$.
We observe that  $\C$ is projective if and only if $\C_0$ is
projective and that $\C$ is Gorenstein if and only if $\C_0$ is
Gorenstein.

Let $\Gamma=\Gamma_{\C}$ be the corresponding upper triangular
matrix algebra of $\C$. Observe that $R_i=k\Ac(x_i)$ is a group
algebra of a finite group. In particular, it is a selfinjective
algebra. Then each $R_i$-$R_j$-bimodule $M_{ij}$ has finite
projective dimension on both sides if and only if it is projective
on both sides. Consequently, the statement is immediately due to
Proposition~\ref{XChen1}.
\end{proof}

\begin{exa}\label{EX1}
{\rm Let $G$ be a finite group and $\mathcal{P}$ a finite poset. We
assume that $\mathcal{P}$ is a \emph{$G$-poset}, that is, $G$ acts
on $\mathcal{P}$ by poset automorphisms. We recall that the
\emph{transporter category} $G\propto \mathcal{P}$ is defined as
follows. It has the same objects as $\mathcal{P}$, that is, ${\rm
Obj}(G\propto \mathcal{P})={\rm Obj}\mathcal{P}$. For $x, y \in {\rm
Obj}(G\propto \mathcal{P})$, a morphism from $x$ to $y$ is an
element $g$ in $G$ satisfying $gx\leq y$. The corresponding morphism
is denoted by $(g; gx\leq y)$. The composition of morphisms is given
by the multiplication in $G$.

We observe that $G\propto \mathcal{P}$ is a finite EI category.
Here, we use the fact that $gx\leq x$ implies $gx=x$. One can check
directly that if $\Hom_{G\propto \mathcal{P}}(x,y)\neq \phi$, then
both ${\rm Aut}_{G\propto \mathcal{P}}(x)$ and ${\rm Aut}_{G\propto
\mathcal{P}}(y)$ act freely on $\Hom_{G\propto \mathcal{P}}(x,y)$,
in particular, both the left and right stabilizers $L_{\alpha}$ and
$R_{\alpha}$ of a morphism $\alpha$ are trivial; compare
\cite[Definition 2.1]{XF1}. By {\rm Corollary \ref{PI}}, $G\propto
\mathcal{P}$ is projective over $k$. Then the category algebra
$k(G\propto \mathcal{P})$ is Gorenstein by {\rm Proposition
\ref{GP}}; compare\cite[Lemma 2.3.2]{XF2}.}
\end{exa}

\begin{thm}\label{1GP}
 Let $k$ be a field and $\C$ be a finite EI category. Then the category algebra $k\C$ is $1$-Gorenstein if and only if  $\C$ is free and projective over $k$.
\end{thm}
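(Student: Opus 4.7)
The plan is to reduce to the triangular matrix algebra picture via the isomorphism $k\C \cong \Gamma_{\C}$ of Notation~\ref{NOTE}, and then to combine Proposition~\ref{XChen1'}(2) with Proposition~\ref{MatInterpOfFreeness} in a direct fashion. First I would reduce to the skeletal case: since $k\C$ and $k\C_0$ are Morita equivalent for any skeleton $\C_0$, and since $\C$ is free (respectively projective) if and only if $\C_0$ is so, there is no loss in assuming $\Obj\C=\{x_1,\ldots,x_n\}$ with $\Homc(x_i,x_j)=\emptyset$ for $i<j$. Under this convention, $k\C$ is identified with the upper triangular matrix algebra $\Gamma_{\C}$ whose diagonal entries $R_i=k\Ac(x_i)$ are group algebras of finite groups and therefore quasi-Frobenius rings.

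Next I would invoke Proposition~\ref{XChen1'}(2) applied to $\Gamma_{\C}$: since the diagonal rings are quasi-Frobenius, $k\C$ is $1$-Gorenstein if and only if (a) every bimodule $M_{ij}=k\Homc(x_j,x_i)$ is finitely generated projective on both sides, and (b) each left $\Gamma_t$-module $M_t^*$ is projective for $1\leq t\leq n-1$. Condition (a) is, by definition, exactly the assertion that $\C$ is projective over $k$ (Definition~\ref{DefOfProjCat}); note that finite generation is automatic since $\C$ is a finite category. So after this step the theorem reduces to showing that, under the standing hypothesis that $\C$ is projective, condition (b) is equivalent to $\C$ being free.

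This last equivalence is precisely the content of Proposition~\ref{MatInterpOfFreeness}, so the work is already done: assuming $\C$ is projective, the freeness of $\C$ is equivalent to the projectivity of every $M_t^*$ as a left $\Gamma_t$-module. Chaining the two equivalences yields the theorem.

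The step that carries the real mathematical weight has already been isolated as Proposition~\ref{MatInterpOfFreeness}; within the proof of the theorem itself there is no genuine obstacle, only careful bookkeeping. The only small subtlety worth flagging is the asymmetry of the freeness condition: the criterion (b) is about left modules and is indexed by the leading principal submatrices $\Gamma_t$, which corresponds via the ordering $\Homc(x_i,x_j)=\emptyset$ for $i<j$ to freeness being tested from each object $x_{t+1}$ in turn (cf.\ Lemma~\ref{EC2} and Lemma~\ref{EC}). So one should make sure that the ordering convention is in force when quoting both Proposition~\ref{XChen1'}(2) and Proposition~\ref{MatInterpOfFreeness}, which it is by the paragraph opening this section.
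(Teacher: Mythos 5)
Your proof is correct and follows essentially the same route as the paper: reduce to the skeletal case, identify $k\C$ with the triangular matrix algebra $\Gamma_{\C}$, and combine the Gorenstein criterion over quasi-Frobenius diagonal entries (Proposition~\ref{XChen1'}(2)) with Proposition~\ref{MatInterpOfFreeness}. The paper's own proof in Section~5 unpacks the appeal to Proposition~\ref{XChen1'}(2) into its constituent steps (Proposition~\ref{LG1}(1)--(2), Corollary~\ref{ProjFiniteMatModOverGore}, Lemma~\ref{LE}) rather than citing it directly, but the logical content and the chain of reductions are the same.
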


\begin{proof}
We assume that $\C$ is skeletal. The reason is similar to the first
paragraph in the proof of Theorem \ref{GP}.

 Let $\Gamma=\Gamma_{\C}$ be the corresponding upper
triangular matrix algebra of $\C$. As mentioned above, each
$R_i=k\Ac(x_i)$ is a selfinjective algebra. In particular, a module
over $R_i$ having finite projective dimension is necessarily
projective.

For the ``if'' part, assume that $\C$ is free and projective over
$k$. Then all bimodules $M_{ij}=k\Homc(x_j,x_i)$ are finitely
generated and projective on both sides. By
Proposition~\ref{MatInterpOfFreeness}, each $M_t^*$ is a projective
left $\Gamma_t$-module for $1\leq t\leq n-1$. Then $\Gamma$ is
$1$-Gorenstein by Proposition~\ref{LG1}(2).

For the ``only if'' part, assume that $\Gamma$ is $1$-Gorenstein.
Then Proposition~\ref{LG1}(1) implies that each $\Gamma_t$ is
$1$-Gorenstein and each $M_t^*$ is a projective left
$\Gamma_t$-module for $1\leq t\leq n-1$. By
  Corollary \ref{ProjFiniteMatModOverGore}, $\pd_{R_i} M_{i,t+1}< \infty$ for $1\leq i<t+1\leq n$, and thus $M_{i,t+1}$ is a projective left $R_i$-module.
   By Lemma~\ref{LE}, $\pd (M_t^*)_{R_{t+1}}< \infty$ for $1\leq t\leq
   n-1$, since $M_t^*$ is a projective left $\Gamma_t$-module. Since each $M_{ij}$ is a direct summand of $M_{j-1}^*$ as
   $R_j$-modules, we have $\pd (M_{ij})_{R_j}< \infty$ for $1\leq i<j\leq
   n$, and thus $M_{ij}$ is a projective right $R_j$-module. Then the category $\C$ is projective. Since each
$M_t^*$ is a projective left $\Gamma_t$-module for $1\leq t\leq
n-1$, the category $\C$ is free by
Proposition~\ref{MatInterpOfFreeness}.
\end{proof}

\begin{exa}
{\rm Let $\mathcal{P}$ be a finite poset. For two elements $x$ and
$y$, we denote $x<y$ if $x\leq y$ and $x\neq y$. By a} chain, {\rm
we mean a totally ordered set. We observe that $\mathcal{P}$ is free
as a category if and only if for any $x\leq y$ in $\mathcal{P}$, the
closed interval $[x,y]$ is a chain.}

{\rm Let $G$ be a finite group and $\mathcal{P}$ a finite $G$-poset.
Consider the transporter category $G\propto \mathcal{P}$. Recall
that a morphism $(g;gx\leq y)$ in $G\propto \mathcal{P}$ is an
isomorphism if and only if $gx=y$. We observe that a non-isomorphism
$(g;gx<y)$ in $G\propto \mathcal{P}$ is unfactorizable if and only
if there is no object $z \in {\rm Obj}\mathcal{P}$ such that
$gx<z<y$. We infer by the UFP that the transporter category
$G\propto \mathcal{P}$ is free if and only if the category
$\mathcal{P}$ is free. By Example \ref{EX1}, we have that the
transporter category $G\propto \mathcal{P}$ is projective. Then by
Theorem \ref{1GP}, the category algebra $k(G\propto \mathcal{P})$ is
$1$-Gorenstein if and only if the poset $\mathcal{P}$ is free as a
category. We mention that this result can be obtained by combining
\cite[Lemma 2.3.2]{XF2} and \cite[Proposition 2.2]{AR}.}
\end{exa}

\begin{exa}{\rm(\cite[ Thorem 5.3]{LLi})}\label{EX}
\emph{Let $\C$ be a finite EI category. Then the category algebra
$k\C$ is hereditary if and only if $\C$ is free satisfying that the
endomorphism groups of all objects have orders invertible in $k$}.

{\rm We may assume that $\C$ is skeletal. Let $\Gamma=\Gamma_{\C}$
be the corresponding upper triangular matrix algebra of $\C$. We
first claim that $\Gamma$ has finite global dimension if and only if
each $\Ac(x_i)$ has order invertible in $k$. In this case, the
category $\C$ is projective over $k$ by Corollary \ref{PI}. Indeed,
by \cite[Corollary 4.21 (4)]{DE}, $\Gamma$ has finite global
dimension if and only if each $R_i=k\Ac(x_i)$ has finite global
dimension, which is equivalent to that each $R_i=k\Ac(x_i)$ is
semi-simple. Then we have the claim.

We recall the well-known fact that a finite dimensional algebra is
hereditary if and only if it is $1$-Gorenstein with finite global
dimension. Then the required result follows from the above claim and
Theorem \ref{1GP}.}
\end{exa}

\section*{Acknowledgements}
The author is grateful to her supervisor Professor Xiao-Wu Chen for
his guidance. This work is supported by the National Natural Science
Foundation of China (No. 11201446), the Fundamental Research Funds
for the Central Universities (WK0010000039) and the Key Program of
Excellent Youth Foundation of Colleges in Anhui Province
(2013SQRL071ZD).



\begin{thebibliography}{10}

\bibitem{ARS}
M. Auslander, I. Reiten, S. O. Smal{\o}, \emph{Representation theory
of Artin algebras,} Cambridge Studies in Advanced Mathematics,
\textbf{36}, Cambridge University Press, Cambridge, 1995.

\bibitem{AR}
M. Auslander, I. Reiten, \emph{Cohen-Macaulay and Gorenstein Artin
algebras,} in: Representation theory of finite groups and
finite-dimensional algebras (Bielefeld, 1991), 221--245, Progr.
Math., 95, Birkh\"{a}user, Basel, 1991.

\bibitem{XChen}
X.-W. Chen, \emph{Singularity categories, Schur functors and
triangular matrix rings,} Algebr. Represent. Theory \textbf{12}
(2009), no. 2-5, 181--191.

\bibitem{DE}
R. M. Fossum, P. A. Griffith, I. Reiten, \emph{Trivial extensions of
abelian categories,} Lecture Notes in Mathematics, Vol.
\textbf{456}. Springer-Verlag, Berlin-New York, 1975.

\bibitem{EC}
E. E. Enochs, M. Cort\'{e}s-Izurdiaga, B. Torrecillas,
\emph{Gorenstein conditions over triangular matrix rings,} J. Pure
Appl. Algebra \textbf{218} (2014), no. 8, 1544--1554.

\bibitem{JA}
J. L. Alperin, \emph{Local representation theory,} Cambridge Studies
in Advanced Mathematics, \textbf{11}. Cambridge University Press,
Cambridge, 1986.

\bibitem{YIw}
Y. Iwanaga, \emph{On rings with finite self-injective dimension,}
 Comm. Algebra \textbf{7} (1979), no. 4, 393--414.

\bibitem{LLi}
L. P. Li, \emph{A characterization of finite EI categories with
hereditary category algebras,} J. Algebra \textbf{345} (2011),
213--241.

\bibitem{PWebb1}
P. Webb, \emph{Standard stratifications of EI categories and
Alperin's weight conjecture,} J. Algebra \textbf{320} (2008), no.
12, 4073--4091.

\bibitem{PWebb2}
P. Webb, \emph{An introduction to the representations and cohomology
of categories,} Group representation theory, 149--173, EPFL Press,
Lausanne, 2007.

\bibitem{XF1}
F. Xu, \emph{On local categories of finite groups,} Math. Z.
\textbf{272} (2012), no. 3-4, 1023--1036.

\bibitem{XF2}
F. Xu, \emph{Support varieties for transporter categories algebras,}
J. Pure Appl. Algebra \textbf{218} (2014), no. 4, 583--601.

\bibitem{XZ}
B. L. Xiong, P. Zhang, \emph{Gorenstein-projective modules over
triangular matrix Artin algebras,} J. Algebra Appl. \textbf{11}
(2012), no. 4, 1250066, 14 pp.



\bibitem{Zaks}
A. Zaks, \emph{Injective dimensions of semi-primary rings,} J.
Algebra \textbf{13} (1969), 73--86.

\end{thebibliography}
\end{document}